\documentclass[preprint,10bp,3p,textwidth=31pc,textheight=194.25mm]{elsarticle}
\usepackage{amsmath,amsthm,amsfonts,amssymb}
\usepackage{subfig}
\usepackage{blkarray}
\usepackage{caption}
\usepackage{float}
\usepackage{algpseudocode}
\usepackage{tikz}
\usepackage{multirow}
\usepackage{hyperref}
\usepackage{circuitikz}
\usepackage{blindtext, rotating}
\usepackage{makecell}
\usepackage{amsfonts}
\hypersetup{
    colorlinks=true,
    linkcolor=blue,
    urlcolor=blue}

\numberwithin{equation}{section}
\usepackage{xcolor}
\usepackage{colortbl}
\usepackage[normalem]{ulem}
\usepackage{sectsty}
\definecolor{astral}{RGB}{46,116,181}
\subsectionfont{\color{astral}}
\sectionfont{\color{astral}}
\usepackage{colortbl}
\DeclareMathAlphabet{\mathpzc}{OT1}{pzc}{m}{it}
\DeclareFontFamily{OT1}{pzc}{}
\DeclareFontShape{OT1}{pzc}{m}{it}{<-> s * [0.900] pzcmi7t}{}
\DeclareMathAlphabet{\mathpzc}{OT1}{pzc}{m}{it}

\usepackage{amsbsy}
\usepackage{amsmath}
\usepackage{accents}
\newlength{\dhatheight}

\newenvironment{frcseries}{\fontfamily{frc}\selectfont}{}

\DeclareMathAlphabet\mathbfcal{OMS}{cmsy}{b}{n}
\definecolor{darkslategray}{rgb}{0.18, 0.31, 0.31}
\definecolor{warmblack}{rgb}{0.0, 0.26, 0.26}

\usepackage{multirow}
\usepackage{mathtools}
\usepackage{algorithm}
\usepackage{algorithmicx}
\makeatletter
\def\BState{\State\hskip-\ALG@thistlm}
\makeatother
\newtheorem{theorem}{Theorem}[section]
\newtheorem{note}{Note}[section]
\newtheorem{lemma}[theorem]{Lemma}
\newtheorem{corollary}[theorem]{Corollary}
\theoremstyle{definition}
\usepackage{hyperref}
\newtheorem{definition}{Definition}[section]
\newtheorem{remark}{Remark}[section]
\newtheorem{example}{Example}[section]

\journal{Journal of Applied Mathematics and Computing}

\begin{document}
\begin{frontmatter}
\title{\textcolor{warmblack}{\bf Direct Methods for Singular Fuzzy Linear Systems Using Generalized Inverses and Its Applications}}
\cortext[cor1]{Corresponding author}
\author[label1]{Abinaya R}
\address[label1]{Department of Mathematics, School of Advanced Sciences,  Vellore Institute of Technology, Chennai, India}
\ead{abinaya.r2024@vitstudent.ac.in}
\author[label1]{Ashish Kumar Nandi \corref{cor1}}
\ead{ashishkumar.nandi@vit.ac.in}

\begin{abstract} 
Fuzzy matrices provide an effective framework for modeling uncertainty in scientific and engineering problems, particularly fuzzy linear systems (FLS). This work transforms a general FLS into a crisp linear system using an embedding approach and reduces it to a standard block-structured form via column operations. A direct LU decomposition is developed under suitable range conditions, enabling the computation of minimum-norm solutions of rectangular FLS using the generalized $\{1,2\}$, $\{1,2,3\}$, and Moore–Penrose inverses. A full-rank decomposition is further proposed to compute the Moore–Penrose inverse for arbitrary rectangular matrices, and consistency conditions for these inverses are established. A unified framework for obtaining strong fuzzy solutions based on monotonicity and non-negativity constraints are presented. Efficient algorithms based on $\mathcal{L}U$, $QR$, and $SVD$ decompositions are developed for the block-structured matrix. The applicability and computational efficiency of the proposed methods are demonstrated through fuzzy circuit equations and Markov chain processes.
\begin{keyword}
 Moore-Penrose inverse, generalized inverse, direct decomposition, fuzzy linear system.\\
{\it Mathematics Subject Classification:} 15A09, 03E72, 65F05.
\end{keyword}
\end{abstract}
\end{frontmatter}
\section{Introduction} \label{intro}
\par
Matrices provide a compact and structured way to handle multiple variables, enabling complex calculations and solving higher-dimensional problems in various scientific and engineering domains. For non-singular linear systems, traditional methods such as $LU$ decomposition is widely used to directly compute unique solution. However, the complexity of solving these systems arises when the coefficient matrices are rectangular. This limitation has led to the development of generalized inverses, such as the reflexive generalized inverse \cite{reflexive}, inner and outer inverses \cite{Inner_outer}, and Moore-Penrose inverses \cite{Moore,Penrose}, for determining a least squares solution. These generalized inverses are used in the growing field of applicable mathematics in diverse areas, including optimization, game theory, and linear programming. Although these generalized inverses significantly broaden the scope for solving inconsistent linear systems, practical implementation remains challenging because precise parameter values are often unavailable due to uncertainty arising from external factors, incomplete information, or measurement errors. To address this, Dubois and Prade \cite{fuzzy matrix} introduced the concept of a fuzzy matrix, where each element is represented by fuzzy numbers instead of precise values. This flexibility incorporates a powerful way to handle these uncertainties in fields including game theory \cite{gametheory}, business analysis \cite{profit}, and economic modeling \cite{Economic}, where the precise prediction of gain, loss, demand, and supply can be uncertain. Moreover, by employing these fuzzy matrices, the traditional linear system leads to the formation of a fuzzy linear system (FLS) and a fully fuzzy linear system (FFLS). In an FLS \cite{friedman}, the coefficient matrix remains crisp, whereas the right-hand side vector is a fuzzy vector. Extending this idea, Dubois and Prade \cite{FFLS} initially proposed an FFLS \cite{Deh-4}, incorporating fuzziness in both the coefficient matrix and the right-hand side vector.\par
Friedman {\it et al.} \cite{friedman} first developed a general model for solving an $n \times n$ non-singular FLS using an embedding approach that replaces the original system by an equivalent $2n \times 2n$ classical linear system (crisp). In order to reduce the computational complexity of solving these systems, Abbasbandy {\it et al.} \cite{Abb-2} and Ali {\it et al.} \cite{LU-2} proposed the direct decomposition approaches, such as $LU$ and Cholesky decomposition methods, provided the coefficient matrix has a non-zero leading principal minor. Furthermore, Mohammad and Amir \cite{Inherited} proposed a type of inherited $LU$ factorization under additional conditions to refine the solving efficiency. For rectangular systems with full column rank, Matinfar {\it et al.} \cite{QR} developed a $QR$ decomposition via the Householder method. Meanwhile, Allahviranloo and Salahshour \cite{AS} introduced the $1-cut$ approach, enabling the conversion of FLS into an interval system with symmetric spreads to generate multiple solutions. The real FLS are limited when problems such as electrical circuits involve inherent imaginary components. To address this, Jahantigh {\it et al.} \cite{Complex} first explored a general method for solving a non-singular complex FLS, in which the coefficient matrix and the right-hand side vectors are converted into two real FLS and then employed Friedman {\it et al.} \cite{friedman} approach to solve these systems. Mahmoud {\it et al.} \cite{Eig} proposed an eigenvalue method using the Grobner basis which ensures exactness but requires high computational costs. To overcome such challenges, Behera and Chakraverty \cite{Beh} established two theorems for solving the $n \times n$ system, which on combining provides a final solution for the complex FLS.
Although, these techniques are effective, they become increasingly challenging to apply to high-dimensional systems.
To overcome this limitation, various iterative methods have been developed in the context of fuzzy, including Gauss-Jacobi \cite{All}, Gauss-Seidel \cite{All}, Successive Over Relaxation (SOR) \cite{Deh-1}, Adomian decomposition \cite{All-2}, Symmetric SOR \cite{Deh-1}, the conjugate gradient method \cite{Abb-1}, and Richardson \cite{Deh-1}. In parallel, generalized inverses \cite{General} offer an effective alternative to deal with the singular and non-square FLS. Asady \cite{Asady} presented a method for solving an $m \times n$ fuzzy system, considering $m \leq n$. Further, Biljana {\it et al.} \cite{MP-1} proposed a unique $\{1,3\}$ ($\{1,4\}$)-inverse to solve an FLS when the coefficient matrix is of full row (column) rank. To address inconsistency in fuzzy systems, the Moore-Penrose inverse has been widely employed to obtain the unique fuzzy least squares solutions \cite{svd,MP-1, MP-2}. Additionally, several other generalized inverses, such as the group inverse, $\{1\}$-inverse, and Drazin-inverse  have been explored in \cite{Group,1-inverse,Drazin}.\par
FLS forms the foundation for a wide range of applications involving uncertainty. They are widely used in fuzzy optimization problems \cite{Optimisation-1, Optimisation-2}, where uncertainty occurs on the right-hand side in the source of constraints or target goals, such as transportation problems, production planning, and decision-making systems. The relevance of FLS also extends to ordinary differential equations (ODEs) with fuzzy initial conditions \cite{Diff} applied in areas such as population dynamics, robotics, and modeling electrical circuits. Similarly, in the field of electrical engineering, analysis of RLC circuits \cite{Beh,Complex FLS} with uncertain resistances, voltages, capacitances, and inductances. Additionally, fuzzy modeling is increasingly applied in probabilistic areas, particularly in Markov decision processes with fuzzy rewards \cite{Markov-1,Markov-2}, which are useful in queuing models and decision analyses.\par

\subsection{Motivation}
Existing approaches for solving FLS include direct decomposition such as $LU$  \cite{Abb-1}, $QR$ \cite{QR} and Cholesky decomposition \cite{Abb-1}. While these methods are effective and widely used, they are limited to non-singular and full column rank linear systems. Motivated by these methods, this work focuses on extending such decomposition to the class of rectangular matrices. In particular, column rank revealing minimal canonical form (CRRMCF) is employed under a specific range condition, enabling the computation of $\{1,2\}-$inverse, $\{1,2,3\}-$inverse and $MP-$ inverse. In this framework, $\{1,2\}-$ inverse is employed to handle the consistent crisp system associated with the FLS, $\{1,2,3\}-$inverse addresses inconsistent FLS to obtain a least squares solution and $MP-$inverse provides the minimum norm solutions. Moreover, \cite{MP-1} provides a block structure for computing $\{1,3\}-$inverse and $\{1,4\}-$inverse which exhibit high operational counts. This limitation is addressed by the direct decomposition approaches proposed in this work which provide a computational procedure with reduced operational count. Furthermore, a general construction is developed to compute a full rank decomposition of all rectangular matrices without any specific conditions. In addition, we propose three direct decomposition algorithms to computes the $MP-$inverse. The use of these direct decomposition methods gives a structured framework, which facilitates the computation of the minimum norm least squares solution. Beyond extending the direct decomposition approaches, this work further establishes sufficient conditions for obtaining weak or strong fuzzy least squares solutions (Theorem \ref{Theorem strong 1}, \ref{Theorem strong 2}) for FLS that are not addressed in existing direct decomposition works \cite{Abb-2,LU-2}.
 \subsection{Organization}
The paper is systematically organized to provide a framework for solving the FLS using generalized inverses obtained using direct decomposition techniques. These approaches have also been validated on non-singular systems, confirming its applicability across a wider range of matrices. Section \ref{intro} presents an introduction along with a detailed literature review on FLS. Section \ref{prelim} outlines the basic terminologies, definitions, and results related to fuzzy numbers and FLS. In section \ref{main}, we developed an algorithmic approach for rectangular FLS by extending $LU$, $QR$, full rank decomposition and $SVD$ methods and employing generalized inverse including $\{1,2\}$-inverse, $\{1,2,3\}$-inverse and $MP$-inverse to obtain least squares solution. We further establish the consistency of the FLS using $\{1,2\}$-inverse,$\{1,2,3\}$-inverse and $MP-$inverse and provide the necessary and sufficient condition for a solution to be a strong fuzzy solution. Further, the operation count of the proposed method is presented and is compared with the results reported in the existing literature work. In section \ref{Numerical Example}, the validity of the proposed method is demonstrated through practical real-world problems, including analog complex RLC circuits and Markov decision processes with fuzzy rewards. 
 \section{Basic Terminology \& Preliminaries} \label{prelim}
 The key terminologies and notations used in this study are summarized below. A fuzzy set $\tilde{a}$ on the set of real numbers $\mathbb{R}$ is said to be a {\it fuzzy number} \cite{Zadeh} if it satisfies: $\tilde{a}$ must be a normal fuzzy set, the $\alpha-$cut of a fuzzy set $\tilde{a}$ must be a closed interval for every $\alpha \in (0,1]$ and support of $\tilde{a}$ must be bounded. Any arbitrary fuzzy number $\tilde{a}$ can be represented as an ordered pair of functions $(\underline{a}(\alpha),\overline{a}(\alpha))$ \cite{Goe}, which satisfies $\underline{a}(\alpha)$ is a bounded-left continuous non-decreasing function over $[0,1]$, $\overline{a}(\alpha)$ is a bounded-left continuous non-increasing function over $[0,1]$, and $\underline{a}(\alpha)\leq \overline{a}(\alpha)$, for $0 \leq \alpha \leq 1.$ The identity matrix is represented by $I$. The square zero matrix and the zero vector are denoted by $O$ and $\mathbf{O}$, respectively, while the zero matrix of appropriate dimensions is indicated by $\mathcal{O}$. An $m \times n$ real matrix $A$ with rank $\mathtt{r}$ is expressed as $A \in \mathbb{R}_{\mathtt{r}}^{m \times n}$ and the range space of $A$ is denoted as $\mathcal{R}(A).$ The lower triangular matrix is represented by $L$, while the unit upper triangular matrix is denoted by $U$. Let $S$ denote the block-structured matrix, and the scripted letter $\mathcal{L}$ refers to the Column Rank Revealing Minimal Canonical Form (CRRMCF) of $S$, which is discussed in Section \ref{main} and the obtained CRRMCF decomposition is represented by $\mathcal{L}U$. Moreover, $D^T$ denotes the transpose of matrix $D \in \mathbb{R}^{m \times n}$. For any matrix $D \in \mathbb{R}^{m \times n}$, the Moore-Penrose inverse is written as $D^{\dag}$ (also known as the $MP$-inverse), $\{1,2\}$-inverse is denoted by $D^{\{1,2\}}$ and $\{1,2,3\}$-inverse is represented by $D^{\{1,2,3\}}$.\\
Next, we provide some essential definitions related to fuzzy numbers and fuzzy matrices, which are crucial for deriving our main results.

\begin{definition}[\cite{Zimber}]
    A fuzzy number $\tilde{a}=(a,b,c,d)$ is said to be Trapezoidal Fuzzy Number (TrFN) if its membership function is defined as
    \[
    \mu_{\tilde{a}}(x)=\begin{cases}
    0 & x \geq d, x \leq a,\\
       \frac{x-a}{b-a} & a \leq x \leq b,\\
        1 & b \leq x \leq c,\\
        \frac{d-x}{d-c} & c \leq x \leq d,
    \end{cases}
    \]
    where $a \leq b \leq c \leq d.$ Here, a and d represent the lower and upper bound, respectively, while b and c define the core ( {\it i.e.,} collection of all $x$, whose membership degree is equal to 1).
\iffalse
\begin{figure}[H]
\centering
\begin{tikzpicture}[scale=2]
\draw[->,blue] (0,-0.4) -- (4.5,-0.4) node[right] {$x$};
\draw[->,blue] (0,-0.4) -- (0,1.5) node[above] {$\mu_{\tilde{A}}(x)$};
\draw[dotted] (2.0,1) -- (2,-0.4);
\draw[dotted] (2.7,1) -- (2.7,-0.4);
\node[below] at (1,-0.4) {$a$};
\node[below] at (2,-0.4) {$b$};
\node[below] at (2.7,-0.4) {$c$};
\node[below] at (3.7,-0.4) {$d$};
\node at (-0.15,-0.46) {0};
\node[left] at (0,1) {1};
\draw[thick,red] (1,-0.4) -- (2,1) --(2.7,1)-- (3.7,-0.4);
\draw[thick] (0,1) -- (2,1);
\end{tikzpicture}
\caption{TrFN Number}
\end{figure}
\fi
The $\alpha-$ cut of a TrFN $\tilde{a}$ is given by \[\tilde{a}_{\alpha}=[(b-a)\alpha+a,(c-d)\alpha+d], 0 \leq \alpha \leq 1.\] If $b=c,$ then the TrFN reduces to a Triangular Fuzzy Number(TFN).   
\end{definition}
\begin{definition} [\cite{friedman}]
An $m \times n$ linear system  
\begin{align} \label{fls}
d_{11}\tilde{z}_1+d_{12}\tilde{z}_2+\dots+d_{1n}\tilde{z}_n=\tilde{b}_1 \nonumber\\
d_{21}\tilde{z}_1+d_{22}\tilde{z}_2+\dots+d_{2n}\tilde{z}_n=\tilde{b}_2 \nonumber\\ \vdots \nonumber\\ d_{m1}\tilde{z}_1+d_{m2}\tilde{z}_2+\dots+d_{mn}\tilde{z}_n=\tilde{b}_m 
\end{align}
is said to be an $m \times n $ {\it fuzzy linear system} (FLS) if the coefficient matrix $A=(d_{ij}), 1\leq i\leq m,1\leq j\leq n$, is a real matrix (crisp matrix), and  $\tilde{\mathbf{z}}=(\tilde{z}_{j})$ and $\tilde{\mathbf{b}}=(\tilde{b_{i}})$  are fuzzy vectors, where fuzzy vector is a vector in which each element is a fuzzy number.
\end{definition}

\begin{definition}[\cite{friedman}] \normalfont 
    A fuzzy vector $(\tilde{z}_{1},\tilde{z}_{2},...,\tilde{z}_{n})^{T}$ denoted by $\tilde{z}_{j}=(\underline{z}_{j} (\alpha),\overline{z}_{j} (\alpha))$, $1 \leq j \leq n, 0 \leq \alpha \leq 1$ is called a solution of the fuzzy system if it satisfies
    \begin{eqnarray*}
    \sum_{j=1}^{n} d_{ij}\underline{z}_{j}= \underline{b}_{i},~~\sum_{j=1}^{n} d_{ij}  \overline{z}_{j}= \overline{b}_{i}, i=1,2,\dots,m. \end{eqnarray*}
\end{definition}

\begin{definition}[\cite{friedman}]\normalfont \label{embedding}
We determine a $2m \times 2n$ linear system (crisp linear system) of eqn. (\ref{fls}) as follows:
\begin{eqnarray} \label{SZ=B}
  SZ=B,  
\end{eqnarray} where
$s_{i,j}=s_{i+m,j+n}=\begin{cases}
d_{i,j} & d_{i,j} \geq 0\\
0 & \text{otherwise}
\end{cases},~
s_{i,j+n}=s_{i+m,j}=\begin{cases}
-d_{i,j} & d_{i,j} < 0\\
0 & \text{otherwise},
\end{cases}$\\
$Z=(\underline{z}_{1},...,\underline{z}_{n},-\overline{z}_{1},...,-\overline{z}_{n})^{T} \text{~and~} B=(\underline{b}_{1},...,\underline{b}_{m},-\overline{b}_{1},...,-\overline{b}_{m})^{T}, 1 \leq i \leq m, 1 \leq j \leq n$.\\ $S \in \mathbb{R}_{\mathtt{r}}^{2m \times 2n}$ can be equivalently written as \begin{eqnarray} \label{S}
 S=\begin{bmatrix} S_{1} & S_{2} \\ S_{2} & S_{1} \end{bmatrix}, \end{eqnarray}where $S_1,S_2 \in \mathbb{R}^{m \times n}$ and $A=S_{1}-S_{2}.$
\end{definition}
\begin{definition} [\cite{General}] \label{weak and strong}
Let $Z=\{ (\underline{z}_i(\alpha),\overline{z}_i(\alpha)), 1 \leq i \leq n\}$ be a solution of eqn. (\ref{SZ=B}). Define a fuzzy vector $W=\{(\underline{w}_i(\alpha),\overline{w}_i(\alpha)), 1 \leq i \leq n$\}, where
\begin{eqnarray*}
  \underline{w}_i(\alpha)&=&\min \{\underline{z}_i(\alpha),\overline{z}_i(\alpha),\underline{z}_i(1),\overline{z}_i(1)\}  \\
  \overline{w}_i(\alpha)&=&\max \{\underline{z}_i(\alpha),\overline{z}_i(\alpha),\underline{z}_i(1),\overline{z}_i(1)\}.
\end{eqnarray*}
is called a fuzzy solution of $SZ=B$. If $(\underline{z}_i(\alpha),\overline{z}_i(\alpha)), 1 \leq i \leq n$ are all fuzzy number then $\underline{w}_i(\alpha)=\underline{z}_i(\alpha)$ and $\overline{w}_i(\alpha)=\overline{z}_i(\alpha), 1 \leq i \leq n$, and we say that $W$ is a {\it strong fuzzy solution}. Otherwise, $W$ is a {\it weak fuzzy solution}. Note that the weak fuzzy solution may not be a fuzzy vector in general.
\end{definition}

The definitions of $\{1,2\}$-inverse, $\{1,2,3\}$-inverse and $MP$-inverse \cite{Ben,Moore,Penrose} corresponding to the block-structure matrix $S$ of (\ref{S}) are given as follows.
\begin{definition} \normalfont \label{{1,2}-inverse}
  Let $S \in \mathbb{R}^{2m\times 2n}$, a matrix $Y \in \mathbb{R}^{2n\times 2m}$ that satisfies the conditions $SYS=S$ and $YSY=Y$ is called a $\{1,2\}$-inverse or reflexive generalized inverse of $S$. 
\end{definition}

\begin{definition} \normalfont \label{{1,2,3}-inverse}
  Suppose $S \in \mathbb{R}^{2m\times 2n}$, a matrix $Y \in \mathbb{R}^{2n\times 2m}$ that satisfies $SYS=S$, $YSY=Y$ and $(SY)^T=SY$ is called a $\{1,2,3\}$-inverse of $S$. 
\end{definition}

\begin{definition} \normalfont  \label{MP-inverse}
    For any $S \in \mathbb{R}^{2m\times 2n}$, there exists a unique matrix $Y \in \mathbb{R}^{2n\times 2m}$ satisfying the matrix equations: $SYS=S$, $YSY=Y$, $(SY)^T=SY$ and $(YS)^T=YS$ is known as the {\it Moore-Penrose inverse} of $S$.
\end{definition}
\iffalse
\begin{definition}
Consider the partitioned matrix $A=\begin{bmatrix}
 B & C \\D & E  
\end{bmatrix} \in \mathbb{R}^{(p+r) \times (q+s)},$ where $B \in \mathbb{R}^{p \times q},C \in \mathbb{R}^{p \times s},D \in \mathbb{R}^{r \times s}$ and $E \in \mathbb{R}^{r \times s}$. The pseudo-Schur complement of $E$ in $A,$ is defined by \[A/E=B-CE^{\dag}D, \] where $E^{\dag}$ is the $MP-$inverse of $E.$
\end{definition}
\fi
Next, we present some preliminary theorems that are essential for deriving the main results.
\begin{theorem}[\cite{Cline}] \normalfont \label{Theorem 3.8}
Let $W, V \in \mathbb{R}^{m \times n}$ such that $WV^{T}=O.$ Then the $MP$-inverse of $W+V$ is given by $(W+V)^{\dag}=W^{\dag}+(I-W^{\dag}V)[M^{\dag}+(I-M^{\dag}M)NV^{T}(W^{\dag})^{T}W^{\dag}(I-VM^{\dag})],$
    where $ M=(I-WW^{\dag})V,~N=[I+(I-M^{\dag}M)V^{T}(W^{\dag})^{T}W^{\dag}V(I-M^{\dag}M)]^{-1}.$   
\end{theorem}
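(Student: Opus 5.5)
The plan is to reduce the statement to Cline's classical formula for the $MP$-inverse of a column-partitioned matrix $[\,W \;\; V\,]$, using the hypothesis $WV^{T}=O$ to pass from the sum to the block matrix. Set $H=[\,W \;\; V\,]\in\mathbb{R}^{m\times 2n}$ and $J=\begin{bmatrix} I \\ I\end{bmatrix}\in\mathbb{R}^{2n\times n}$, so that $W+V=HJ$.

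\textbf{Step 1 (using $WV^{T}=O$).} Compute
\[
(W+V)(W+V)^{T}=WW^{T}+WV^{T}+VW^{T}+VV^{T}=WW^{T}+VV^{T}=HH^{T},
\]
since $VW^{T}=(WV^{T})^{T}=O$. Next, apply the standard identity $D^{\dag}=D^{T}(DD^{T})^{\dag}$, valid for any real matrix $D$. It follows directly from the Penrose equations, or from the singular value decomposition. This gives
\[
(W+V)^{\dag}=(W+V)^{T}\bigl((W+V)(W+V)^{T}\bigr)^{\dag}=J^{T}H^{T}(HH^{T})^{\dag}=J^{T}H^{\dag}.
\]
So $(W+V)^{\dag}$ is just the sum of the two $n\times m$ blocks of $H^{\dag}$.

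\textbf{Step 2 (Cline's block formula).} Invoke Cline's representation \cite{Cline} of the $MP$-inverse of a partitioned matrix:
\[
[\,W \;\; V\,]^{\dag}=\begin{bmatrix} W^{\dag}-W^{\dag}VK \\ K\end{bmatrix},
\]
where
\[
K=M^{\dag}+(I-M^{\dag}M)\,N\,V^{T}(W^{\dag})^{T}W^{\dag}(I-VM^{\dag}),
\]
with $M=(I-WW^{\dag})V$ and $N$ exactly as in the statement. Adding the two blocks gives
\[
J^{T}H^{\dag}=W^{\dag}+(I-W^{\dag}V)K,
\]
which is precisely the asserted formula. One should also check that $N$ is well defined. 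The bracketed matrix has the form $I+G^{T}G$ with $G=W^{\dag}V(I-M^{\dag}M)$, because $(I-M^{\dag}M)$ is a symmetric idempotent. Hence it is symmetric positive definite and therefore invertible.

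\textbf{Main obstacle.} The only nontrivial ingredient is the partitioned formula of Step 2. If one prefers not to quote it, it can be verified by checking the four Penrose equations for the candidate $Y=\begin{bmatrix} W^{\dag}-W^{\dag}VK \\ K\end{bmatrix}$. The useful facts for that check are:
\begin{itemize}
\item $W^{T}M=O$ and $W^{\dag}M=O$, since $M$ has columns in $\mathcal{R}(W)^{\perp}$;
\item $HY=WW^{\dag}+MM^{\dag}$, an orthogonal projector, because $\mathcal{R}(W)\perp\mathcal{R}(M)$;
\item the $N$-term in $K$ is annihilated by $M$, since $M(I-M^{\dag}M)=O$, and it is designed exactly so that $YH$ is symmetric.
\end{itemize}
Symmetry of $YH$ (the fourth Penrose equation) is the tedious part. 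There I would split $I=M^{\dag}M+(I-M^{\dag}M)$ and use the defining identity $N^{-1}=I+G^{T}G$ to cancel the non-symmetric cross terms. Everything else in the argument is routine.
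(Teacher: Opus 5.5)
The paper does not prove this statement; it quotes it from Cline and uses it as a black box. Your proof is correct, and it is the standard way to derive the sum formula from the partitioned one. The hypothesis $WV^{T}=O$ is used only to get $(W+V)(W+V)^{T}=HH^{T}$. Then $D^{\dag}=D^{T}(DD^{T})^{\dag}$ gives $(W+V)^{\dag}=[\,I\;\;I\,]H^{\dag}$, so the asserted formula is the sum of the two blocks of Cline's column-partitioned formula for $[\,W\;\;V\,]^{\dag}$.

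Compared with the bare citation, your route shows exactly where $WV^{T}=O$ enters. It also supplies the check that $N$ exists (via $I+G^{T}G$), which the statement takes for granted. As written, though, it is not self-contained: it replaces one quoted Cline formula with another.

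If you want self-containment, your Penrose sketch does close. Set $P=I-M^{\dag}M$ and $G=W^{\dag}VP$, and use three facts:
\begin{itemize}
\item $M^{\dag}W=O$;
\item $M^{\dag}V=M^{\dag}M$, because $M^{T}V=M^{T}M$;
\item $PN=NP$, because $GP=G$.
\end{itemize}
These give $KW=NG^{T}$ and $KV=M^{\dag}M+NG^{T}G$. The blocks of $YH$ are then $W^{\dag}W-GNG^{T}$ (top left), $G-GNG^{T}G$ (top right), $NG^{T}$ (bottom left) and $M^{\dag}M+NG^{T}G$ (bottom right). The identity $G-GNG^{T}G=GN$ is equivalent to $N^{-1}=I+G^{T}G$, and it is exactly the cross-term cancellation you anticipated. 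With it, $YH$ is symmetric.
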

 
\begin{theorem}[\cite{Hung}] \normalfont  \label{Lemma 3.7}
 Consider the partitioned matrix $C=\begin{bmatrix}
D & \mathcal{O} \\ E & F \\    
\end{bmatrix},~K=D^TD+E^TE.$ Then the $MP$-inverse of $C$ is
$C^{\dag}=\begin{bmatrix}
    K^{\dag}D^T & K^{\dag}E^{T} \\ \mathcal{O} & F^{\dag}
\end{bmatrix},$ if and only if $E^{T}F=O.$
\end{theorem}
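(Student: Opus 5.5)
We prove the two directions separately. Sufficiency uses a column-partition argument, and necessity reads off one block of the Penrose equation $CC^{\dag}C=C$.

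\emph{Sufficiency.} Write $C=[G,\;H]$ with $G=\begin{bmatrix} D\\ E\end{bmatrix}$ and $H=\begin{bmatrix}\mathcal{O}\\ F\end{bmatrix}$. Then $G^TG=K$ and $G^TH=E^TF$. If $E^TF=O$, the two column blocks are orthogonal, so
\[
C^TC=\begin{bmatrix} K & O\\ O & F^TF\end{bmatrix}
\]
is block diagonal. We use three standard facts:
\begin{itemize}
\item $C^{\dag}=(C^TC)^{\dag}C^T$ for any real matrix, which follows from the Penrose equations;
\item the $MP$-inverse of a block-diagonal matrix is the block diagonal of the $MP$-inverses of its blocks;
\item $(F^TF)^{\dag}F^T=F^{\dag}$.
\end{itemize}
Combining them gives
\[
C^{\dag}=\begin{bmatrix} K^{\dag} & O\\ O & (F^TF)^{\dag}\end{bmatrix}\begin{bmatrix} D^T & E^T\\ \mathcal{O} & F^T\end{bmatrix}
=\begin{bmatrix} K^{\dag}D^T & K^{\dag}E^T\\ \mathcal{O} & F^{\dag}\end{bmatrix}.
\]
This is the claimed formula. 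As a fallback, the four conditions of Definition \ref{MP-inverse} can be checked blockwise directly, using $E^TF=O$ to kill the off-diagonal terms.

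\emph{Necessity.} Assume the displayed matrix, call it $Y$, equals $C^{\dag}$. In particular $CYC=C$. The second block row of $CY$ is $[\,EK^{\dag}D^T,\; EK^{\dag}E^T+FF^{\dag}\,]$. Multiplying it by the second block column $\begin{bmatrix}\mathcal{O}\\ F\end{bmatrix}$ of $C$ and comparing with the $(2,2)$ block of $C$ gives
\[
EK^{\dag}E^TF+FF^{\dag}F=F .
\]
Since $FF^{\dag}F=F$, this reduces to $EK^{\dag}E^TF=O$. Multiplying on the left by $F^T$ gives $X^TK^{\dag}X=O$ with $X=E^TF$.

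Because $K$ is symmetric positive semidefinite, so is $K^{\dag}$. Hence $X^TK^{\dag}X=\big((K^{\dag})^{1/2}X\big)^T\big((K^{\dag})^{1/2}X\big)=O$, which forces $(K^{\dag})^{1/2}X=O$ and therefore $K^{\dag}X=O$.

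Finally, $\mathcal{R}(E^T)\subseteq\mathcal{R}(G^T)=\mathcal{R}(G^TG)=\mathcal{R}(K)$, so $KK^{\dag}E^T=E^T$. Applying $K$ to $K^{\dag}X=O$ then yields $E^TF=KK^{\dag}E^TF=O$.

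The main obstacle is this necessity step. One has to select the right block of the Penrose equations; the $(2,2)$ block of $CYC=C$ works cleanly. One must then pass from $EK^{\dag}E^TF=O$ to $E^TF=O$, which requires both the positive-semidefiniteness argument and the range inclusion $\mathcal{R}(E^T)\subseteq\mathcal{R}(K)$.
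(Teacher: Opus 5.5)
The paper does not prove this statement. It quotes it from Hung and Markham as a preliminary tool, so there is no in-paper argument to compare with, and your proposal has to stand on its own. It does: the argument is correct and self-contained.

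\emph{Sufficiency.} $C^TC=\begin{bmatrix} K & E^TF\\ F^TE & F^TF\end{bmatrix}$ is block diagonal precisely when $E^TF=O$. The three facts you invoke are standard consequences of the Penrose equations:
$C^{\dagger}=(C^TC)^{\dagger}C^T$, blockwise pseudo-inversion of a block-diagonal matrix, and $(F^TF)^{\dagger}F^T=F^{\dagger}$.
The formula therefore follows at once.

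\emph{Necessity.} The $(2,2)$ block of $CYC=C$ does reduce to $EK^{\dagger}E^TF=O$, and your passage from there to $E^TF=O$ is sound. Since $F^TEK^{\dagger}E^TF=O$ and $K^{\dagger}$ is positive semidefinite, you get $K^{\dagger}E^TF=O$. So the columns of $E^TF$ lie in $\mathcal{N}(K^{\dagger})=\mathcal{N}(K)$. They also lie in $\mathcal{R}(E^T)\subseteq\mathcal{R}(K)$. For symmetric $K$ these two subspaces meet only in the zero vector, which is exactly what your identity $E^TF=KK^{\dagger}E^TF$ encodes.

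One contextual remark: the paper's only explicit use of this result is in Step 2 of the proof of Theorem \ref{Theorem 3.9}, to obtain $W^{\dagger}$. There it is used only in the ``if'' direction, with $F=O$ and with $B$ playing the role of $K$. In that case $C^TC=\begin{bmatrix} K & O\\ O & O\end{bmatrix}$, and your first fact, $C^{\dagger}=(C^TC)^{\dagger}C^T$, already gives the needed formula with no further work.
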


\begin{theorem}[\cite{Hung}] \normalfont  \label{Lemma 3.6}
 If the block matrix $C$ is as described in Theorem \ref{Lemma 3.7}, then 
$C^{\dag}=\begin{bmatrix}
    D^{\dag} & E^{T}K^{\dag} \\ \mathcal{O} & F^{T}K^{\dag}
\end{bmatrix},$
where $K=EE^T+FF^T $~iff~$ DE^T=O.$
\end{theorem}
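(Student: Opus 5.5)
The statement to prove is Theorem \ref{Lemma 3.6}. For $C=\begin{bmatrix} D & \mathcal{O}\\ E & F\end{bmatrix}$ and $K=EE^T+FF^T$, it claims that
\[
X=\begin{bmatrix} D^{\dag} & E^TK^{\dag}\\ \mathcal{O} & F^TK^{\dag}\end{bmatrix}
\]
equals $C^{\dag}$ if and only if $DE^T=O$.

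The plan is to reduce to Theorem \ref{Lemma 3.7} by transposition. We have $C^T=\begin{bmatrix} D^T & E^T\\ \mathcal{O} & F^T\end{bmatrix}$, which is block upper triangular rather than lower triangular. So I first conjugate by the block-swap permutations $P=\begin{bmatrix} \mathcal{O} & I\\ I & \mathcal{O}\end{bmatrix}$, with sizes chosen appropriately on each side. This gives
\[
P_1C^TP_2=\begin{bmatrix} F^T & \mathcal{O}\\ E^T & D^T\end{bmatrix},
\]
which has exactly the shape required in Theorem \ref{Lemma 3.7}, with $(D,E,F)$ replaced by $(F^T,E^T,D^T)$.

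Next I use two standard facts: $(C^T)^{\dag}=(C^{\dag})^T$, and $(PCQ)^{\dag}=Q^TC^{\dag}P^T$ for orthogonal $P,Q$. With the substitution above, the matrix $K$ of Theorem \ref{Lemma 3.7} becomes $(F^T)^TF^T+(E^T)^TE^T=FF^T+EE^T$, which is our $K$. The condition $E^TF=O$ of that theorem becomes $(E^T)^TD^T=ED^T=O$, which is equivalent to $DE^T=O$. Theorem \ref{Lemma 3.7} then gives
\[
(P_1C^TP_2)^{\dag}=\begin{bmatrix} K^{\dag}F & K^{\dag}E\\ \mathcal{O} & (D^T)^{\dag}\end{bmatrix}
\]
if and only if $DE^T=O$.

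Finally I undo the permutations and transpose back, using $(K^{\dag})^T=K^{\dag}$ since $K$ is symmetric. The expected result is $\begin{bmatrix} D^{\dag} & E^TK^{\dag}\\ \mathcal{O} & F^TK^{\dag}\end{bmatrix}$, which is the claimed form. Because every operation is invertible and $C^{\dag}$ is unique, the equivalence transfers in both directions.

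The main obstacle is the block bookkeeping. The swaps must place the blocks correctly, the zero block must land in the upper-right position, and the result must return to the stated arrangement rather than a block-permuted one. I would check this by writing the dimensions explicitly, with $D$ of size $p\times q$, $E$ of size $r\times q$ and $F$ of size $r\times s$, and verifying each block product.

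If the permutation route turns out to be awkward, I would instead verify the four Penrose equations for $X$ directly, assuming $DE^T=O$. That check would use $KK^{\dag}E=E$ and $KK^{\dag}F=F$, which hold because $\mathcal{R}([E\ F])=\mathcal{R}(K)$, together with $D^{\dag}=D^{\dag}D^{\dag T}D^T$ to show that $E D^{\dag}=O$ (this follows from $DE^T=O$, i.e. $ED^T=O$). For necessity in that route, I would compare the $(2,1)$ block of $CX$ with the form that symmetry of $CX$ requires.
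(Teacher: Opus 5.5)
The paper does not prove this statement. Like Theorem \ref{Lemma 3.7}, it is quoted from Hung and Markham, so there is no paper argument to match. Your main route is a correct proof, and it shows that the two quoted results are the same fact.

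\begin{itemize}
\item[(i)] The block swap gives $P_1C^TP_2=\begin{bmatrix}F^T&\mathcal{O}\\ E^T&D^T\end{bmatrix}$.
\item[(ii)] The substitution $(D,E,F)\mapsto(F^T,E^T,D^T)$ turns the $K$ of Theorem \ref{Lemma 3.7} into $EE^T+FF^T$. It turns the condition $E^TF=O$ into $ED^T=O$.
\item[(iii)] Undoing the swaps and transposing, with $K^{\dag}$ symmetric, returns exactly $\begin{bmatrix}D^{\dag}&E^TK^{\dag}\\ \mathcal{O}&F^TK^{\dag}\end{bmatrix}$.
\item[(iv)] The map $Y\mapsto(P_2YP_1)^T$ is injective and sends $(P_1C^TP_2)^{\dag}$ to $C^{\dag}$, so both directions of the equivalence transfer.
\end{itemize}

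The paper uses the same transpose trick in the opposite direction, in Step 3 of Theorem \ref{Theorem 3.9}. What your approach buys is a reduction of one citation to the other. The necessity half is still inherited from Theorem \ref{Lemma 3.7}, which the paper also only cites.

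A self-contained proof is equally short. If $DE^T=O$, the block rows $[D\ \ \mathcal{O}]$ and $[E\ \ F]$ are mutually orthogonal. Hence $C^{\dag}$ is the juxtaposition of $[D\ \ \mathcal{O}]^{\dag}=\begin{bmatrix}D^{\dag}\\ \mathcal{O}\end{bmatrix}$ and $[E\ \ F]^{\dag}=[E\ \ F]^TK^{\dag}$.

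Your fallback route has two slips.

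\begin{itemize}
\item[(i)] To get $ED^{\dag}=O$ from $ED^T=O$ you need the identity $D^{\dag}=D^TD^{\dag T}D^{\dag}$, which gives $ED^{\dag}=(ED^T)D^{\dag T}D^{\dag}$. The identity you wrote, $D^{\dag}=D^{\dag}D^{\dag T}D^T$, puts $D^T$ on the wrong side.
\item[(ii)] More seriously, symmetry of $CX$ cannot give necessity. Take the scalars $D=5$, $E=3$, $F=4$. Then $K=25$ and $CX=\begin{bmatrix}1&3/5\\ 3/5&1\end{bmatrix}$, which is symmetric (so is $XC$), yet $DE^T=15\neq 0$.
\end{itemize}

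Necessity comes instead from $CXC=C$. Its $(2,1)$ block reads $ED^{\dag}D+KK^{\dag}E=E$. Since $\mathcal{R}(E)\subseteq\mathcal{R}([E\ \ F])=\mathcal{R}(K)$, we have $KK^{\dag}E=E$, which forces $ED^{\dag}D=O$. Hence $ED^T=ED^{\dag}DD^T=O$.
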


\begin{theorem}[\cite{Reverse}] \normalfont
If $\mathcal{R}(A^TAB) \subseteq \mathcal{R}(B)$ and $\mathcal{R}(BB^TA^T) \subseteq \mathcal{R}(A^T),$ then $(AB)^{\dag}=B^{\dag}A^{\dag}.$
\end{theorem}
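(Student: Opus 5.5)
To prove the reverse order law $(AB)^{\dag}=B^{\dag}A^{\dag}$ under the two range conditions, I will verify directly that $Y:=B^{\dag}A^{\dag}$ satisfies the four Penrose equations of Definition \ref{MP-inverse} with $S$ replaced by $AB$. Uniqueness of the $MP$-inverse then gives the claim. The key is to turn each range inclusion into an algebraic identity using orthogonal projectors. Recall that $BB^{\dag}$ is the orthogonal projector onto $\mathcal{R}(B)$ and $A^{\dag}A$ is the orthogonal projector onto $\mathcal{R}(A^T)$. Then $\mathcal{R}(A^TAB)\subseteq\mathcal{R}(B)$ is equivalent to
\[BB^{\dag}A^TAB=A^TAB,\]
and $\mathcal{R}(BB^TA^T)\subseteq\mathcal{R}(A^T)$ is equivalent to
\[A^{\dag}ABB^TA^T=BB^TA^T.\]

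The first step is to upgrade these to commutation relations. Write $P=BB^{\dag}$.
\begin{itemize}
\item Multiplying the first identity on the right by $B^{\dag}$ gives $PA^TAP=A^TAP$.
\item The left side is symmetric, so $A^TAP$ is symmetric, which means $A^TAP=PA^TA$.
\item Hence $P$ commutes with $A^TA$, and therefore with $A^{\dag}A=(A^TA)^{\dag}A^TA$, since the $MP$-inverse of a symmetric matrix is a polynomial in it.
\end{itemize}
Symmetrically, set $Q=A^{\dag}A$. Transposing the second identity gives $ABB^TQ=ABB^T$. Right-multiplying by $(B^T)^{\dag}$ and using $B^T(B^T)^{\dag}=B^{\dag}B$ is a slightly different route, so I would instead apply the same trick to the symmetric matrix $QBB^TQ$. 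This yields that $Q$ commutes with $BB^T$, hence with $BB^{\dag}$. In fact one commutation relation, $PQ=QP$, suffices; note also that $PQ$ is then a symmetric idempotent.

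The second step is to check the Penrose equations.
\begin{itemize}
\item \emph{Equation 3.} $ABY=ABB^{\dag}A^{\dag}=APA^{\dag}$. Using $A^{\dag}=QA^{\dag}$ and $A=AQ$, this equals $AQPQA^{\dag}$, which (by commutation) equals $APQA^{\dag}$. To see symmetry, write $APA^{\dag}=(A^{\dag})^TA^TAPA^{\dag}$ and use $A^TAP=PA^TA$ to get $(A^{\dag})^TPA^TAA^{\dag}=(A^{\dag})^TPA^T$. This is manifestly symmetric.
\item \emph{Equation 4.} The same computation applies to $YAB=B^{\dag}QB$, using $Q$ commuting with $BB^T$, so that $B^{\dag}QB=B^TQ(B^{\dag})^T$.
\item \emph{Equation 1.} $ABYAB=APQB=AQPB=AB$.
\item \emph{Equation 2.} $YABY=B^{\dag}QPPQA^{\dag}=B^{\dag}PQA^{\dag}=B^{\dag}A^{\dag}$, since $B^{\dag}P=B^{\dag}$.
\end{itemize}

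The main obstacle is the first step: justifying that symmetry of $PA^TAP$ forces $P$ to commute with $A^TA$, and that this commutation transfers to $A^{\dag}A$. The transfer uses that $(A^TA)^{\dag}$ is a polynomial in $A^TA$, or equivalently that $A^{\dag}A$ is the spectral projector of $A^TA$ onto its nonzero eigenspaces. I would state this explicitly as a small lemma before the verification.
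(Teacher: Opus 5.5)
The paper gives no proof of this statement; it is imported from \cite{Reverse} (Greville's reverse-order law) as a preliminary. Your argument is a correct, self-contained proof, and it is essentially the classical one. Write the inclusions as $BB^{\dag}A^TAB=A^TAB$ and $A^{\dag}ABB^TA^T=BB^TA^T$, and right-multiply by $B^{\dag}$ and by $(A^{\dag})^T$ respectively, using $A^T(A^{\dag})^T=A^{\dag}A$. This shows that $A^TABB^{\dag}$ and $A^{\dag}ABB^T$ are symmetric, which is Greville's reformulation of the hypotheses. The four Penrose checks then go through as you wrote them. In Equation 3 the symmetry is not literally manifest. What you actually prove, $X=(A^{\dag})^TPA^T=X^T$ for $X=APA^{\dag}$, is nonetheless exactly what is needed.

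One sentence should be removed: the single relation $PQ=QP$ does \emph{not} suffice. It follows from either inclusion alone and gives only Penrose equations 1 and 2. Equation 3 needs $A^TAP=PA^TA$ (the first inclusion), and equation 4 needs $QBB^T=BB^TQ$ (the second). For example, take $A=\mathrm{diag}(1,2)$ and $B=(1,1)^T$. Then $Q=I$, so $PQ=QP$ and the second inclusion holds. Yet $APA^{\dag}$ is not symmetric, and $(AB)^{\dag}=\frac{1}{5}(1,2)\neq(\frac{1}{2},\frac{1}{4})=B^{\dag}A^{\dag}$. Your verification of equations 3 and 4 actually uses the stronger relations, so the proof itself is unaffected once that remark is deleted.
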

\section{Methodology} \label{main}
 This section introduces a systematic approach for computing the $\{1,2\}$-inverse, $\{1,2,3\}$-inverse and $MP$-inverse of a $2m\times 2n$ crisp system (\ref{SZ=B}). To compute these generalized inverses, we constructed a decomposition with the help of column operations that transform the matrix into a particular structured form. Further, this procedure is implemented for $LU$ decomposition to find the solution of the rectangular FLS (\ref{fls}). In addition to this, a construction to compute full rank decomposition is provided to obtain $MP-$inverse without any special conditions, making it applicable to all rectangular matrices. Following this, $QR$ and $SVD$ decompositions are derived using suitable orthogonal matrices, and these reduced forms are utilized to compute generalized inverses for solving any type of fuzzy linear systems. Moreover, we have examined the necessary and sufficient conditions for ensuring the consistency of fuzzy linear system. Finally, we also examined conditions for obtaining a strong fuzzy solution.

 \subsection{$\mathcal{L}U$ decomposition for the block-structure matrix}
 This method is elaborated from Dopico {\it et al.} \cite{Dop}, with a significant modification wherein the column operations are replaced in place of the row operations, resulting in a canonical form obtained by the right multiplication of a unit upper triangular matrix with the corresponding given matrix. %By using the elementary column operations, the span of the original columns are preserved. This helps one to verify the required range condition in Lemma \ref{L_U} using the CRRMCF of the corresponding matrix obtained.

\begin{definition} \label{CRRMCF}
           For any matrix $G \in \mathbb{R}^{m\times n}$ with $rank(G)=\mathtt{t}$, let the set $\{ (r_k,c_k)\}^{\mathtt{t}}_{k=1}$, where $c_1 <c_2< \dots <c_{\mathtt{t}},$ indicates the positions of the topmost non-zero leading entry (pivot entry) in that column. Then, the matrix $G$ is said to be in {\it Column Rank Revealing Minimal Canonical Form (CRRMCF)} if all the entries to the right of the pivot entries are zero.\\
           If a matrix $G$ is in CRRMCF, then
     \begin{enumerate}
         \item[(i)] $G(r_k,c_k) \neq $ 0 for $k \in \{1,2,\dots,\mathtt{t}\}.$
         \item[(ii)] $G(i,j) = 0 $ if $j \notin \{c_1,c_2,\dots,c_{\mathtt{t}}\},$ for $i\in\{1,2,\dots,m\}.$
         \item[(iii)] $G(i,c_k)=0$ if $i<r_k$ for $k \in \{1,2,\dots,\mathtt{t}\}.$
         \item[(iv)] $G(r_k,j)=0$ if $j>c_k$ for $k \in \{1,2,\dots,\mathtt{t}\}.$
     \end{enumerate}
\end{definition}
\begin{note} \normalfont
    Here, $r_k$ and $c_k$ denote the row and column indices, respectively of the $k-$th leading entry (pivot entry), and not the $k-$th row or the $k-$th column of the given matrix.
\end{note}
In the following lemma, we construct a CRRMCF of the block structured matrix $S \in \mathbb{R}^{2m \times 2n}.$ Instead of working with $2m \times 2n$ matrix, we focus on two $n \times n$ matrices to obtain the CRRMCF decomposition of $S$ with a specific range condition. This provides a simplified representation of the block-structure matrix $S$, and the lemma under a minimal range condition assumptions ensures that the off diagonal matrices can be incorporated without affecting the structure of $S.$ 
\begin{lemma} \normalfont \label{L_U}
For any $A \in \mathbb{R}^{m \times n}$, suppose $S\in \mathbb{R}^{2m \times 2n}$ is a block-structure matrix defined in eqn (\ref{S}). Assume that $\mathcal{L}_{11}U_{11}$ is a CRRMCF decomposition of $S_1$,  $\mathcal{L}_{22}U_{22}$ is a CRRMCF decomposition of the matrix $S_1-S_2U_{11}^{-1}\mathcal{L}_{11}^{\dag} S_2$, where $\mathcal{L}_{11},\mathcal{L}_{22} \in \mathbb{R}^{m \times n}$ are CRRMCF and $U_{11}, U_{22}\in \mathbb{R}^{n \times n}$ are unit upper triangular matrices, and \[\mathcal{R}(S_2) \subseteq \mathcal{R}(S_1).\] Then the CRRMCF decomposition of $S$ is given as \[S=\mathcal{L}U,\] where $\mathcal{L}=\begin{bmatrix}
\mathcal{L}_{11}& O\\ \mathcal{L}_{21} & \mathcal{L}_{22}\\
\end{bmatrix}$ and $
U=\begin{bmatrix}
    U_{11} & U_{12} \\ O & U_{22}\\
\end{bmatrix}$ with off-diagonal block matrices \[\mathcal{L}_{21}=S_2U_{11}^{-1}, U_{12}=\mathcal{L}_{11}^{\dag} S_2.\]
\begin{proof}
Given, $\mathcal{L}_{11}U_{11}$ be a CRRMCF decomposition of $S_1$. Now, consider the block decomposition \[\begin{bmatrix}
    S_1 & S_2 \\ S_2 & S_1
\end{bmatrix}=\begin{bmatrix}
\mathcal{L}_{11}& O\\ \mathcal{L}_{21} & \mathcal{L}_{22}\\
\end{bmatrix} 
\begin{bmatrix}
    U_{11} & U_{12} \\ O & U_{22}\\
\end{bmatrix}.\]
By comparing the corresponding blocks, we get
\begin{eqnarray}
    S_1 = \mathcal{L}_{11}U_{11}, S_2 = \mathcal{L}_{21}U_{11},
    S_2 = \mathcal{L}_{11}U_{12} \text{ and }
    S_1 = \mathcal{L}_{21}U_{12}+\mathcal{L}_{22}U_{22}. \label{LU_1}
    \end{eqnarray}
    Since $U_{11}$ is non-singular matrix and from eqn. (\ref{LU_1}) it follows that \[\mathcal{R}(\mathcal{L}_{11})=\mathcal{R}(S_1).\]
  Given $\mathcal{R}(S_2) \subseteq \mathcal{R}(S_1)$, we obtain \[\mathcal{L}_{11}\mathcal{L}_{11}^{\dag}S_2=S_2.\]
   Substituting in eqn. (\ref{LU_1}) we obtain, $S_1-S_2U_{11}^{-1}\mathcal{L}_{11}^{\dag} S_2=\mathcal{L}_{22}U_{22}.$ Since $\mathcal{L}_{11}$ and $\mathcal{L}_{22}$ are CRRMCF, the presence of $\mathcal{L}_{21}$ does not change any column structure. Thus, the obtained structure $\mathcal{L} \in \mathbb{R}^{2m \times 2n}$ is a CRRMCF of $S$ and $U \in \mathbb{R}^{2n \times 2n}$ is a unit upper triangular matrix. Therefore, the decomposition $S=\mathcal{L}U$ is a CRRMCF decomposition of $S.$
   \end{proof}
\end{lemma}

\begin{remark}
\begin{enumerate}
    \item[(i)] In Lemma \ref{L_U}, the CRRMCF decomposition of $S_1$ and $S_1-S_2{U}_{11}^{-1}\mathcal{L}_{11}^{\dag} S_2$ can be obtained using Algorithm \ref{algo1}. 
    \item[(ii)] Further, this representation is not unique, but the corresponding $\mathcal{L}$ obtained in this method is unique (Since $\mathcal{L}_{11}$ and $ \mathcal{L}_{22}$ are unique).
\end{enumerate}
\end{remark}
\begin{remark} \normalfont
Consider an $2n \times 2n$ block structured matrix $S=\begin{bmatrix} S_{1} & S_{2} \\ S_{2} & S_{1} \end{bmatrix},$ where $S_1,S_2 \in \mathbb{R}^{n \times n}$. Suppose $S_1$ is non-singular, then \[\mathcal{R}(S_1)=\mathbb{R}^n.\] Thus, the condition $\mathcal{R}(S_2) \subseteq \mathcal{R}(S_1)$ holds automatically. Hence, the CRRMCF decomposition of $S=\mathcal{L}U$ can be obtained directly without any condition.
\end{remark}
\begin{remark} \normalfont
    If $S_2=\mathcal{O},$ then $\mathcal{R}(S_2) \subseteq \mathcal{R}(S_1)$ is trivial.
\end{remark}
 The next result provides an exact $LU$ form of the matrix $S$ under some equivalent conditions.
\begin{lemma}\normalfont \label{Theorem 3.5}
    Assume that $A \in \mathbb{R}^{m\times n}$ and $S$ is a block-structure matrix of the form (\ref{S}) with $\mathcal{R}(S_2) \subseteq \mathcal{R}(S_1)$. Let $S=\mathcal{L}U $
    be a CRRMCF decomposition of $S\in \mathbb{R}_{\mathtt{r}}^{2m\times 2n}$, and let $\{(r_k,c_k)\}^{\mathtt{r}}_{k=1}$ be the set of all indices that denote the position of the pivot element in the nonzero column of $\mathcal{L}$, then the following are equivalent.
\begin{enumerate}
    \item[(i)] $r_k\geq c_k,~\mbox{for all}~ k=1,2,\hdots,{\mathtt{r}}.$ 
    \item[(ii)] $\mathcal{L}$ is lower triangular.
    \item[(iii)] $\mathcal{L}U$ is an $LU$ decomposition of $S$.
\end{enumerate}
\begin{proof}
$(i) \implies (ii) $ Suppose $r_k\geq c_k,~\mbox{for all}~ k,$ then by Definition \ref{CRRMCF} (iii), $\mathcal{L}(i,c_k)=0$ for $i<r_k$ and by Definition \ref{CRRMCF} (ii) for $i<c_k,$ non-pivot columns are zero. Thus $\mathcal{L}$ is lower triangular.\\
$(ii) \implies (i) $ If for some $k,~r_k<c_k$ then $\mathcal{L}(r_k,c_k) \neq 0,$ which contradicts $\mathcal{L}$ is lower triangular.\\
$(ii) \iff (iii) $ Proof follows directly from $LU$ decomposition \cite{Matrix computation}.
\end{proof}
\end{lemma}
In Lemma \ref{L_U}, the obtained $\mathcal{L}$ is not necessarily lower triangular. To overcome this, the following lemma ensures the required lower triangular form.

\begin{lemma} \normalfont \label{Lemma LPU}
    Let $A \in \mathbb{R}^{m \times n}$ and $S=\mathcal{L}U$ be a CRRMCF decomposition of the corresponding block-structure matrix $S$ with $\mathcal{R}(S_2) \subseteq \mathcal{R}(S_1)$. Then there exists a permutation matrix $P$ such that $$S=LP^{-1}U=\begin{bmatrix}
    L_{11} & O \\ L_{21} & L_{22}\\
\end{bmatrix}P^{-1}
\begin{bmatrix}
    U_{11} & U_{12} \\ O & U_{22}\\
\end{bmatrix} \label{S=LPU},$$ where $L=\mathcal{L}P$ is a lower triangular matrix.
\begin{proof}
    Since $S=\mathcal{L}U,$ where $U$ is a unit upper triangular matrix, we have $rank(S)=rank(\mathcal{L}),$ where ${\mathtt{r}}$ is the rank of the matrix $S.$ By Definition \ref{CRRMCF} (ii) and (iv), the non-pivot column of $\mathcal{L}$ is zero and no two pivot share the same row in $\mathcal{L}$. Thus the $\mathtt{r}$ pivot row indices $r_1,r_2,\dots, r_{\mathtt{r}}$ remains distinct. Also from Definition \ref{CRRMCF}, we have $c_1<c_2< \hdots<c_{\mathtt{r}}$. Let $P$ be a permutation matrix which rearrange the $\mathtt{r}$ pivot columns of $\mathcal{L}$ to position $1,2,\hdots,\mathtt{r}$ and thus their corresponding rows $r_1,r_2,\dots,r_{\mathtt{r}}$ appear in increasing order and the zero columns are shifted to the right. Hence after rearranging, the $i^{th}$ pivot occupies column $i$ and row index equal to the $i^{th}$ smallest of ${\mathtt{r}}$ distinct values, which implies $r_i \geq c_i$ for all $i=1,2,\hdots,\mathtt{r}.$ By Lemma \ref{Theorem 3.5}, we obtain the given result.
\end{proof}
\end{lemma}
The $\{1,2\}$-inverse is employed to handle the FLS when the associated crisp linear system is consistent. The following theorem provides the methodology to compute the $\{1,2\}$-inverse based on the decomposition obtained above. %Since $S=LP^{-1}U$, where $P$ is a permutation matrix and $U$ is a unit upper triangular matrix, we have $rank(S)=rank(L)$, where $r$ is the rank of $S.$
\begin{theorem} \normalfont \label{Theorem 3.1}
   Suppose $A \in {\mathbb{R}}^{m\times n}$ and $S=LP^{-1}U$, obtained from Lemma \ref{L_U} and \ref{Lemma LPU}. If $Q$ is a permutation matrix such that
$QL=\begin{bmatrix}
    L_{\mathtt{r}} & \mathcal{O} \\ K & \mathcal{O}
\end{bmatrix},$ then
    $S^{\{1,2\}}=U^{-1}PL^{\{1,2\}},$ where $L_{\mathtt{r}} \in \mathbb{R}_{\mathtt{r}}^{r\times r}$ and $K \in \mathbb{R}^{(m-r)\times r}$.
    In this case,
$L^{\{1,2\}}=\begin{bmatrix}
      {L_{\mathtt{r}}}^{-1} & \mathcal{O} \\ \mathcal{O} & \mathcal{O}\\
    \end{bmatrix}Q \text{ and }
    U^{-1}=\begin{bmatrix}
    U_{11}^{-1} & -U_{11}^{-1}U_{12}U_{22}^{-1} \\ O & U_{22}^{-1}\\
\end{bmatrix}$.
    \begin{proof} \normalfont 
    By Definition \ref{CRRMCF} (ii), the non-pivot columns of $\mathcal{L}$ is entirely zero and $\mathcal{L}$ has exactly ${\mathtt{r}}$ pivot columns. Let $Q$ be a permutation matrix that permute row so that ${\mathtt{r}}$ pivot rows appear first. Then the first ${\mathtt{r}}$ rows forms $L_{\mathtt{r}}$ which is nonsingular and hence we obtain $QL=\begin{bmatrix}
    L_{\mathtt{r}} & \mathcal{O} \\ K & \mathcal{O}
\end{bmatrix}.$
We first show that $L^{\{1,2\}}$ is a $\{1,2\}$-inverse of $L$, and then we proceed to prove that $S^{\{1,2\}}$ is a $\{1,2\}$-inverse of $S$. Now \begin{eqnarray}
LL^{\{1,2\}}L&=& \begin{pmatrix}
    Q^{-1}\begin{bmatrix}
    L_{\mathtt{r}} & \mathcal{O} \\ F & \mathcal{O}
\end{bmatrix}
\end{pmatrix}
\begin{pmatrix}
\begin{bmatrix}
      L_{\mathtt{r}}^{-1} & \mathcal{O} \\ \mathcal{O} & \mathcal{O}\\
    \end{bmatrix}Q
    \end{pmatrix}
    \begin{pmatrix}
    Q^{-1}
\begin{bmatrix}
    L_{\mathtt{r}} & \mathcal{O} \\ F & \mathcal{O}
\end{bmatrix}
\end{pmatrix}\nonumber\\
   &=&
   Q^{-1}\begin{bmatrix}
    L_{\mathtt{r}} & \mathcal{O} \\ F & \mathcal{O}
\end{bmatrix}
    \begin{bmatrix}
    I_{\mathtt{r}} & \mathcal{O} \\ \mathcal{O} & \mathcal{O}
\end{bmatrix}=
    Q^{-1}\begin{bmatrix}
    L_{\mathtt{r}} & \mathcal{O} \\ F & \mathcal{O}
\end{bmatrix}=L. \label{3.1}
\end{eqnarray}
Similarly, we obtain
\begin{eqnarray}
L^{\{1,2\}}LL^{\{1,2\}}
    &=&
\begin{bmatrix}
      I_{\mathtt{r}} & \mathcal{O} \\ \mathcal{O} & \mathcal{O}\\
    \end{bmatrix}
\begin{bmatrix}
      L_{\mathtt{r}}^{-1} & \mathcal{O} \\ \mathcal{O} & \mathcal{O}\\
    \end{bmatrix}Q= \begin{bmatrix}
      L_{\mathtt{r}}^{-1} & \mathcal{O} \\ \mathcal{O} & \mathcal{O}\\
    \end{bmatrix}Q=L^{\{1,2\}}. \label{3.2}
\end{eqnarray}
From eqn. (\ref{3.1}) and (\ref{3.2}), we can conclude that $L^{\{1,2\}}$ is a ${\{1,2\}}$-inverse of $L$.\\
For the matrix $U=\begin{bmatrix}
    U_{11} & U_{12} \\ O & U_{22}\\
\end{bmatrix}$, it is immediate that $UU^{-1}=I=U^{-1}U,$
where $U^{-1}=\begin{bmatrix}
    U_{11}^{-1} & -U_{11}^{-1}U_{12}U_{22}^{-1} \\ O & U_{22}^{-1} \end{bmatrix}$. Moreover, to establish the main claim that requires examining the conditions $SS^{\{1,2\}}S=S$ and $S^{\{1,2\}}SS^{\{1,2\}}=S^{\{1,2\}}.$ Consider, \begin{eqnarray}
SS^{\{1,2\}}S
&=& LP^{-1}U(U^{-1}PL^{\{1,2\}})LP^{-1}U =LP^{-1}(UU^{-1})PL^{\{1,2\}}LP^{-1}U \nonumber\\
&=&L(P^{-1}P)L^{\{1,2\}}LP^{-1}U=(LL^{\{1,2\}}L)P^{-1}U=LP^{-1}U=S.\nonumber
\end{eqnarray}
Similarly, it follows that $S^{\{1,2\}}SS^{\{1,2\}}=S^{\{1,2\}}.$ Since Hermite normal form exists for all matrices \cite{Ben}, which implies the existence of $\{1,2\}-$inverse.
\end{proof}
\end{theorem}
\begin{corollary} \normalfont \label{Corollary 3.2}
    If $S$ is non-singular, then
    \begin{eqnarray*}
    S^{\{1,2\}}=U^{-1}PL^{-1}=
     \begin{bmatrix}
    U_{11}^{-1} & -U_{11}^{-1}U_{12}U_{22}^{-1} \\ O & U_{22}^{-1}
\end{bmatrix}P\begin{bmatrix}
      L_{11}^{-1} & O \\ -L_{22}^{-1}L_{21}L_{11}^{-1} & L_{22}^{-1}\\
    \end{bmatrix}.   
\end{eqnarray*}
\end{corollary}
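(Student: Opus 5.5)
The plan is to obtain the corollary as the nonsingular specialization of Theorem \ref{Theorem 3.1}, and then to make $L^{-1}$ explicit through the block structure of $L$. Throughout I assume, as the statement implicitly does, that $m=n$, so that $S\in\mathbb{R}^{2n\times 2n}$ is invertible with rank $\mathtt{r}=2n$.

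\emph{Step 1: $L$ is invertible.} By Lemma \ref{Lemma LPU} we have $S=LP^{-1}U$, where $P$ is a permutation matrix and $U$ is unit upper triangular. Both are invertible, so $L=SU^{-1}P$ is nonsingular. In particular, $L$ has $2n$ pivot columns and no zero columns.

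\emph{Step 2: the $\{1,2\}$-inverse collapses to $S^{-1}$.} With no zero columns, the permutation $Q$ of Theorem \ref{Theorem 3.1} gives $QL=L_{\mathtt{r}}$ with $\mathtt{r}=2n$, and the blocks $K$ and the zero blocks are empty. Hence
\[
L^{\{1,2\}}=L_{\mathtt{r}}^{-1}Q=(QL)^{-1}Q=L^{-1}Q^{-1}Q=L^{-1}.
\]
Substituting into Theorem \ref{Theorem 3.1} gives $S^{\{1,2\}}=U^{-1}PL^{-1}$. A direct check also works: $U^{-1}PL^{-1}=(LP^{-1}U)^{-1}=S^{-1}$, and for a nonsingular matrix every $\{1,2\}$-inverse equals $S^{-1}$ (from $SYS=S$, cancel $S$ on both sides).

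\emph{Step 3: explicit block inverses.} The formula for $U^{-1}$ is already recorded in Theorem \ref{Theorem 3.1}. For $L=\begin{bmatrix} L_{11} & O\\ L_{21} & L_{22}\end{bmatrix}$, block lower triangular, $\det L=\det L_{11}\det L_{22}\neq 0$, so $L_{11}$ and $L_{22}$ are invertible. The standard block formula is
\[
L^{-1}=\begin{bmatrix} L_{11}^{-1} & O\\ -L_{22}^{-1}L_{21}L_{11}^{-1} & L_{22}^{-1}\end{bmatrix},
\]
and I would verify it by multiplying $LL^{-1}$ blockwise: the $(2,1)$ block is $L_{21}L_{11}^{-1}-L_{22}L_{22}^{-1}L_{21}L_{11}^{-1}=O$.

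The only delicate point is Step 1 together with the invertibility of the diagonal blocks. One has to know that the partition of $L=\mathcal{L}P$ inherited from Lemma \ref{Lemma LPU} is square $n\times n$ in each block, so that $\det L=\det L_{11}\det L_{22}$ applies. I would justify this from the block-triangular form stated in Lemma \ref{Lemma LPU}, with all blocks of size $n\times n$ when $m=n$. Once that is settled, everything else is routine.
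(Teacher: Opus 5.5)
Your proposal is correct and matches the argument the paper intends; the paper gives no separate proof. You specialize Theorem \ref{Theorem 3.1} to $\mathtt{r}=2n$, so $L^{\{1,2\}}=L^{-1}$ (equivalently, the only $\{1,2\}$-inverse of a nonsingular $S$ is $S^{-1}=U^{-1}PL^{-1}$), and then expand $U^{-1}$ and $L^{-1}$ with the standard block-triangular inverse formulas. The point you flag as delicate is immediate: $L$ is lower triangular (Lemma \ref{Lemma LPU}) and nonsingular, so its diagonal is nonzero, and its $n\times n$ partition therefore has a zero $(1,2)$ block and invertible triangular diagonal blocks $L_{11},L_{22}$.
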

\iffalse
\begin{corollary} \normalfont \label{Corollary 3.3}
If $\mathcal{L}_{22}=O$ in Lemma \ref{L_U}, then 
\[
S^{\{1,2\}}=\begin{bmatrix}
 U_{11}^{-1}\mathcal{L}_{11}^{-1} & O \\ O & O   
\end{bmatrix}.
\]
\end{corollary} 
\fi
The following result characterizes the general form of $\{1,2\}-$inverse associated with the block structured matrix $S.$
\begin{theorem} \normalfont
Consider $A \in {\mathbb{R}}^{m\times n}$. Let $S=LP^{-1}U$ obtained from Lemma \ref{L_U} and \ref{Lemma LPU} and $Q$ be a permutation matrix such that
$QL=\begin{bmatrix}
    L_{\mathtt{r}} & \mathcal{O} \\ K & \mathcal{O}
\end{bmatrix},$ then the $\{1,2\}-$ inverse of $S$ is given as \[ 
    S^{\{1,2\}}=U^{-1}PE^{\{1,2\}}Q,
    \]
    where $E^{\{1,2\}}=\begin{bmatrix}
        L_{\mathtt{r}}^{-1} & \mathcal{O} \\ X_{2} L_{\mathtt{r}}^{-1} & \mathcal{O}
    \end{bmatrix}$ and $X_2$ is arbitrary.
    \begin{proof}
        Clearly, $(\mathcal{L}U)^{\{1,2\}}=U^{-1} \mathcal{L}^{\{1,2\}}.$ Also, from full rank decomposition, we have $$E\left\{1,2\right\}=\left\{\begin{bmatrix}
     I_{\mathtt{r}} \\ X_2   
    \end{bmatrix}\begin{bmatrix}
     L_{\mathtt{r}}^{-1} & \mathcal{O} \end{bmatrix} \text{ : $X_2$ is arbitrary} \right\},$$ where $E\left\{1,2\right\}$ represents the collection of all $\{1,2\}-$inverse of the matrix $E.$
     Now consider, \begin{eqnarray*}
         SS^{\{1,2\}}S&=&\left(Q\begin{bmatrix}
         L_{\mathtt{r}} & \mathcal{O} \\ K & \mathcal{O}
     \end{bmatrix}P^{-1}U\right)\left(U^{-1}P\begin{bmatrix}
        L_{\mathtt{r}}^{-1} & \mathcal{O} \\ X_{2} L_{\mathtt{r}}^{-1} & \mathcal{O}
    \end{bmatrix}Q\right)\left(Q\begin{bmatrix}
         L_{\mathtt{r}} & \mathcal{O} \\ K & \mathcal{O}
     \end{bmatrix}P^{-1}U\right)\\
     &=& Q\begin{bmatrix}
         L_{\mathtt{r}} & \mathcal{O} \\ K & \mathcal{O}
     \end{bmatrix}\begin{bmatrix}
        L_{\mathtt{r}}^{-1} & \mathcal{O} \\ X_{2} L_{\mathtt{r}}^{-1} & \mathcal{O}
    \end{bmatrix}\begin{bmatrix}
         L_{\mathtt{r}} & \mathcal{O} \\ K & \mathcal{O}
     \end{bmatrix}P^{-1}U=Q\begin{bmatrix}
         L_{\mathtt{r}} & \mathcal{O} \\ K & \mathcal{O}
     \end{bmatrix}P^{-1}U=S.
     \end{eqnarray*}
     Similarly, it can be shown that $S^{\{1,2\}}SS^{\{1,2\}}=S^{\{1,2\}}.$
     Since $X_2$ is arbitrary, the result follows:\[
    S^{\{1,2\}}=U^{-1}P\begin{bmatrix}
        L_{\mathtt{r}}^{-1} & \mathcal{O} \\ X_{2} L_{\mathtt{r}}^{-1} & \mathcal{O}
    \end{bmatrix}Q.\]
    \end{proof}
    \end{theorem}
The following algorithm provides a CRRMCF decomposition of $S=\mathcal{L}U$, thereby providing a direct procedure to compute the $\{1, 2\}$-inverse of a singular block-structure matrix $S$ using Theorem \ref{Theorem 3.1}. 
\begin{algorithm}[H] 
\caption{Computation of $\{1, 2\}-$inverse using $\mathcal{L}U$} \label{algo1}
\begin{algorithmic}[1]
\State \textbf{Input:} $A \in \mathbb{R}^{m \times n}.$
\State By definition \ref{embedding}, construct $S=\begin{bmatrix} S_{1} & S_{2} \\ S_{2} & S_{1} \end{bmatrix}.$
\Comment{Block-structure form}
\If{$\mathcal{R}(S_2) \subseteq \mathcal{R}(S_1)$}
\State Initialize $k \gets 0$, $U_{11} \gets I$, $\mathcal{S}_1 \gets S_1$ 
\Comment{Initialization of CRRMCF}
\For {$j=1$ to $\min\{m,n\}$}
\Comment{Loop over columns}
\If {column j of $\mathcal{S}_1$ is nonzero}
\State $k \gets k+1$
\State $c_k \gets j$
\Comment{Current pivot column index}
\State $r_k\gets$ pivot row index in $j^{th}$ column
\For {$i=c_k+1$ to $n$}
\Comment{Eliminate entries right to pivot}
\State Perform a column operation to get $\mathcal{S}_1(r_k,i)=0$ using $\mathcal{S}_1(r_k,c_k)$.
\State Store the multiplier $\dfrac{\mathcal{S}_1(r_k,i)}{\mathcal{S}_1(r_k,c_k)}$, used to make $\mathcal{S}_1(r_k,i)=0$ in $U_{11}(c_k,i).$
\EndFor
\EndIf
\EndFor
%\State Set $t_1 \leftarrow k$
\State Assign $\mathcal{L}_{11} \gets \mathcal{S}_1.$
\Comment{Derived CRRMCF of $S_1$}
\State Compute the CRRMCF decomposition of $S_1-S_2{U^{-1}_{11}}{\mathcal{L}^{\dag}_{11}}S_2=\mathcal{L}_{22}U_{22}.$
\State Calculate $\mathcal{L}_{21}=S_2U^{-1}_{11}$ and $U_{12}=\mathcal{L}^{\dag}_{11}S_2.$
\EndIf
\State Substituting the obtained expressions, we get the CRRMCF decomposition of $S$ as $S=
\begin{bmatrix}
\mathcal{L}_{11}& O\\ \mathcal{L}_{21} & \mathcal{L}_{22}\\
\end{bmatrix} 
\begin{bmatrix}
    U_{11} & U_{12} \\ O & U_{22}\\
\end{bmatrix}.$
\State Find permutation matrix $P$ and $Q$ such that $\mathcal{L}P=L$
   and $QL=\begin{bmatrix}
    L_{\mathtt{r}} & \mathcal{O} \\ K & \mathcal{O} \end{bmatrix}$, where $L_{\mathtt{r}} \in \mathbb{R}_{\mathtt{r}}^{r\times r}$ and $K \in \mathbb{R}^{(m-r)\times r}.$
    \State Compute $S^{\{1, 2\}}=\begin{bmatrix}
    U_{11}^{-1} & -U_{11}^{-1}U_{12}U_{22}^{-1} \\ O & U_{22}^{-1}\\
\end{bmatrix}P\begin{bmatrix}
      {L_{\mathtt{r}}}^{-1} & \mathcal{O} \\ \mathcal{O} & \mathcal{O}\\
    \end{bmatrix}Q.$
\end{algorithmic}
\end{algorithm}
\begin{remark}
    In practical problems, the test for determining pivot positions $(r_k,c_k)$ in the CRRMCF decomposition of any matrix $G$ (i.e., $S_1$ in line 6 and $S_1-S_2U_{11}^{-1}\mathcal{L}_{11}^{\dag} S_2$ in line 17 of Algorithm \ref{algo1})) uses a relative tolerance $\epsilon =\tau \|G\|_{\infty}$ \cite[p.276]{Matrix computation}, where $\tau \approx 2.22 \times 10^{-16}$ is the machine precision.
\end{remark}

The $\{1,2\}-$inverse can be employed to characterize the consistency of the crisp system associated with the FLS. The following result provides the necessary and sufficient condition for the existence of a solution.
\begin{theorem} \normalfont \label{Theorem 3.4} Consider the FLS given in (\ref{fls}) and let $SZ=B$ be the associated crisp system. Then the system $SZ=B$ is said to be {\it consistent} if and only if for some $S^{\{1,2\}}$, \begin{eqnarray}
 SS^{\{1,2\}}B=B \label{consis}  
\end{eqnarray} and the general solution is \begin{eqnarray}
Z=S^{\{1,2\}}B+(I-S^{\{1,2\}}S)h,\text{ for arbitrary }h\in \mathbb{R}^{2n} \label{consis_2}.\end{eqnarray} Moreover, if the obtained solution vector satisfies Definition \ref{weak and strong}, then the corresponding FLS admits a strong fuzzy solution. Otherwise, yields a weak fuzzy solution.
\begin{proof}
     Assume that $SZ=B$ is consistent. Then there exists some $y \in \mathbb{R}^{2n}$ such that $B=Sy$.
     Hence \[SS^{\{1,2\}}B=SS^{\{1,2\}}Sy=Sy=B.\]
     Conversely, if eqn. (\ref{consis}) holds, then by considering $Y=S^{\{1,2\}}B$, we have
     \[SY=SS^{\{1,2\}}B=B.\]
     Thus, $Y$ is a solution of $SZ=B,$ which gives the system to be consistent.
     From the given consistency condition, it is clear that $S^{\{1,2\}}B$ is a particular solution. Moreover, the homogeneous part of the solution directly follows from $SZ=O$ and $SS^{\{1,2\}}SZ=O$.
     \end{proof}
\end{theorem}
\iffalse
\begin{theorem} \normalfont
\textcolor{red}{ Consider a consistent system described in eqn. (\ref{fls}) and let $SZ=B$ be its associated  crisp system. For any $S^{\{1,2\}}$-inverse of $S$, $Z=S^{\{1,2\}}B+(I-S^{\{1,2\}}S)h$, for arbitrary $h\in \mathbb{R}^{2n}$ is a solution of the system and it admits either a strong or weak fuzzy solution.
%Moreover, if $h \in $ Null space of $S$, then the system admits the unique weak or strong fuzzy solution.\\
Proof follows immediately from Theorem \ref{Theorem 3.4} and Definition \ref{weak and strong}.}
\end{theorem}
\fi
In the above part, $\{1,2\}-$inverse was employed to solve the FLS when the associated crisp system is consistent. However, in many situation, the system may be inconsistent. In such situation it is necessary to determine a least squares solution. Thus, we construct $\{1,2,3\}-$inverse of the block structured matrix $S$.

\begin{theorem} \normalfont \label{(1,2,3)}
Suppose $S=LP^{-1}U$, obtained from Lemma \ref{L_U} and \ref{Lemma LPU} corresponding to a FLS (\ref{fls}). If $Q$ is a permutation matrix such that
$QL=\begin{bmatrix}
    L_{\mathtt{r}} & \mathcal{O} \\ K & \mathcal{O}
\end{bmatrix}$, then
    \[ 
    S^{\{1,2,3\}}=U^{-1}PE^{\{1,2,3\}}Q,
    \] where $L_{\mathtt{r}} \in \mathbb{R}_{\mathtt{r}}^{r\times r}, K \in \mathbb{R}^{(m-r)\times r}$, $E^{\{1,2,3\}}=\begin{bmatrix}
    L_{\mathtt{r}}^{-1}F^{-1} &L_{\mathtt{r}}^{-1}F^{-1}G^{T}\\ \mathcal{O} &\mathcal{O}\end{bmatrix}, F=I_{\mathtt{r}}+G^{T}G$ and G=$KL_{\mathtt{r}}^{-1}.$
\end{theorem}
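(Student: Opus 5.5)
The plan is to reduce all three Penrose conditions for $S$ to the same conditions for the small block matrix $E:=QL=\begin{bmatrix} L_{\mathtt{r}} & \mathcal{O}\\ K & \mathcal{O}\end{bmatrix}$. This mirrors the proof of Theorem \ref{Theorem 3.1}.

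\textbf{Step 1 (reduction to $E$).} Since $Q$ is a permutation matrix, $Q^{-1}=Q^T$ and $L=Q^TE$, so $S=Q^TEP^{-1}U$. Put $Y=U^{-1}PE^{\{1,2,3\}}Q$. The inner factors cancel, since $UU^{-1}=I$, $P^{-1}P=I$ and $QQ^T=I$. This gives
\[
SYS=Q^T(EE^{\{1,2,3\}}E)P^{-1}U,\qquad YSY=U^{-1}P(E^{\{1,2,3\}}EE^{\{1,2,3\}})Q,\qquad SY=Q^T(EE^{\{1,2,3\}})Q .
\]
The third identity holds because $SY=Q^TEP^{-1}UU^{-1}PE^{\{1,2,3\}}Q$. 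Because $Q$ is orthogonal, $SY$ is symmetric exactly when $EE^{\{1,2,3\}}$ is symmetric. Hence it suffices to show that $E^{\{1,2,3\}}$ is a $\{1,2,3\}$-inverse of $E$.

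\textbf{Step 2 (factored form).} Let $C=\begin{bmatrix} I_{\mathtt{r}}\\ G\end{bmatrix}$ with $G=KL_{\mathtt{r}}^{-1}$. Then $K=GL_{\mathtt{r}}$, and we can write
\[
E=C\,L_{\mathtt{r}}\begin{bmatrix} I_{\mathtt{r}} & \mathcal{O}\end{bmatrix},\qquad E^{\{1,2,3\}}=\begin{bmatrix} I_{\mathtt{r}}\\ \mathcal{O}\end{bmatrix}L_{\mathtt{r}}^{-1}F^{-1}C^T,\qquad C^TC=I_{\mathtt{r}}+G^TG=F.
\]
The matrix $F$ is symmetric positive definite, hence invertible. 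This is the one point that must be justified for the formula to make sense at all.

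\textbf{Step 3 (verification).} Using $\begin{bmatrix} I & \mathcal{O}\end{bmatrix}\begin{bmatrix} I\\ \mathcal{O}\end{bmatrix}=I_{\mathtt{r}}$, we get $EE^{\{1,2,3\}}=CF^{-1}C^T$, which is symmetric. This is the orthogonal projector onto $\mathcal{R}(C)$, and it gives condition (3). Next, $EE^{\{1,2,3\}}E=CF^{-1}(C^TC)L_{\mathtt{r}}[I\ \mathcal{O}]=E$, which gives condition (1). Finally, $E^{\{1,2,3\}}EE^{\{1,2,3\}}=\begin{bmatrix} I\\ \mathcal{O}\end{bmatrix}L_{\mathtt{r}}^{-1}F^{-1}(C^TC)F^{-1}C^T=E^{\{1,2,3\}}$, which gives condition (2).

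Substituting back through Step 1 yields $SYS=S$, $YSY=Y$ and $(SY)^T=SY$. By Definition \ref{{1,2,3}-inverse}, $Y$ is therefore a $\{1,2,3\}$-inverse of $S$.

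The main obstacle is essentially bookkeeping. One has to place the permutations $P$ and $Q$ consistently, so that $L=Q^{-1}E$ rather than $L=QE$ is used, and so that the symmetry of $SY$ survives conjugation by $Q$. The latter relies on $Q$ being orthogonal. Once the factorization $E=CL_{\mathtt{r}}[I\ \mathcal{O}]$ is in hand, the three identities are one-line computations.
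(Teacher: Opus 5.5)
Your proposal is correct. The paper states this theorem without a separate proof, but its argument for the analogous $\{1,2\}$-inverse result (Theorem~\ref{Theorem 3.1}) is the route you take: verify the Penrose conditions for $E=QL$ and transport them to $S$ through $UU^{-1}=I$, $P^{-1}P=I$ and $QQ^{T}=I$. Your factorization $E=C\,L_{\mathtt{r}}[\,I_{\mathtt{r}}\ \ \mathcal{O}\,]$ with $C^{T}C=F$ turns each identity into a one-line check, including the symmetry $EE^{\{1,2,3\}}=CF^{-1}C^{T}$ needed for condition (3), and you rightly note that $F$ is positive definite and hence invertible.
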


\iffalse
\begin{theorem} \normalfont
   Suppose $A \in {\mathbb{R}}^{m\times n}$ and $S=LP^{-1}U$, obtained from Lemma \ref{L_U} and \ref{Lemma LPU}. If $Q$ is a permutation matrix such that
$QL=\begin{bmatrix}
    L_{\mathtt{r}} & \mathcal{O} \\ K & \mathcal{O}
\end{bmatrix}$, then
    \[ 
    S^{\{1,2,3\}}=U^{-1}P\begin{bmatrix}
        (L_{\mathtt{r}}^{T}L_{\mathtt{r}}+K^TK)^{-1} & (L_{\mathtt{r}}^{T}L_{\mathtt{r}}+K^TK)^{-1} \\ \mathcal{O} & X
    \end{bmatrix}\begin{bmatrix}
        L_{\mathtt{r}}^T & K^T \\\mathcal{O} & \mathcal{O}
    \end{bmatrix}Q,
    \]
    where $X$ is arbitrary matrix.
    \begin{proof}
     Since $L_{\mathtt{r}}$ is non-singular, the matrix $L_{\mathtt{r}}^{T}L_{\mathtt{r}}+K^TK$ is positive definite and hence $(L_{\mathtt{r}}^{T}L_{\mathtt{r}}+K^TK)^{-1}$ exists. Similar to Theorem \ref{(1,2,3)}, $SS^{\{1,2,3\}}S=S,S^{\{1,2,3\} }SS^{\{1,2,3\}}=S^{}$ and $(SS^{\{1,2,3\}})^{T}=SS^{\{1,2,3\}}$ can be verified.   
    \end{proof}
\end{theorem}
\fi 
%Since the normal form exist for all matrices, which implies the existence of $\{1,2,3\}-$inverse.
In order to obtain a least squares solution of an FLS, the $\{1,2,3\}$-inverse is computed using $\mathcal{L}U$ decomposition as illustrated in the following algorithm.
\begin{algorithm}[H] 
\caption{Fuzzy least squares solution of a singular FLS} \label{algo2}
\begin{algorithmic}[1]
    \State \textbf{Input:} An $n\times n$  FLS, $A\tilde{\mathbf{z}}=\tilde{\mathbf{b}}$.
    \State Implement Algorithm \ref{algo1} to obtain $S$.
    \State Compute $S^{\{1,2,3\}}$ using Theorem \ref{(1,2,3)} and $Z=S^{\{1,2,3\}}B$.
\end{algorithmic}
\end{algorithm}

The following theorem establishes the existence of strong or weak fuzzy least squares solutions for the inconsistent FLS.
\begin{theorem} \normalfont
    Let us consider the FLS given in (\ref{fls}) and let $SZ=B$ be the corresponding crisp system obtained. If $S^{\{1,2,3\}}$ is a $\{1,2,3\}-$inverse of $S,$ then the FLS (\ref{fls}) admits either strong or weak fuzzy least squares solution.
    \begin{proof}
       Suppose $S^{\{1,2,3\}}$ is a $\{1,2,3\}-$inverse of $S$ and let $Z=S^{\{1,2,3\}}B.$ Now consider the system $SZ=B$. Since $Z=S^{\{1,2,3\}}B$ which implies $SZ=SS^{\{1,2,3\}}B$. Multiplying $S^T$ on both sides, we get $S^TSZ=S^TSS^{\{1,2,3\}}B.$ Hence, $S^TSZ=S^T(SS^{\{1,2,3\}})^TB=S^T(S^{\{1,2,3\}})^TS^TB= (S S^{\{1,2,3\}} S)^T B= S^TB,$which is the normal equation associated with the crisp system. Therefore, $Z=S^{\{1,2,3\}}B$ gives a least square solution to the inconsistent system. By definition \ref{weak and strong}, FLS (\ref{fls}) corresponding to this admits either strong or weak fuzzy least squares solution.
    \end{proof}
\end{theorem}

Computing the $MP$-inverse is of particular interest for the matrix of the form (\ref{S}), as demonstrated in the next result, which is motivated by the $MP$-inverse of a partitioned matrix of Ching-hsiang Hung and Markham \cite{Hun}.
\begin{theorem} \label{Theorem 3.9} \normalfont
   Consider a matrix $A \in \mathbb{R}^{m\times n}$, and a block-structure matrix $S$ of the form (\ref{S}) with $\mathcal{R}(S_2) \subseteq \mathcal{R}(S_1)$. Let \[
    S=\begin{bmatrix}
    \mathcal{L}_{11} & O \\ \mathcal{L}_{21} & \mathcal{L}_{22}\\
\end{bmatrix}
\begin{bmatrix}
    U_{11} & U_{12} \\ O & U_{22}\\
\end{bmatrix}
    \] be the CRRMCF decomposition of $S$, then the $MP$-inverse is given by
    \begin{eqnarray}
    S^{\dag}=\begin{bmatrix}
    B^{\dag}[(\mathcal{L}_{11}U_{11})^{T}-C(H+F^{\dag}D^{T})] & B^{\dag}[(\mathcal{L}_{21}U_{11})^{T}-C(K+F^{\dag}E^{T})] \\ H+F^{\dag}D^{T} & K+F^{\dag}E^{T}
    \end{bmatrix},\label{main_eqn}
 \end{eqnarray}
    where 
    \begin{eqnarray}
             B&=&(\mathcal{L}_{11}U_{11})^{T}\mathcal{L}_{11}U_{11}+(\mathcal{L}_{21}U_{11})^{T}\mathcal{L}_{21}U_{11} \nonumber\\
        C&=&(\mathcal{L}_{11}U_{11})^{T}\mathcal{L}_{11}U_{12}+(\mathcal{L}_{21}U_{11})^{T}(\mathcal{L}_{21}U_{12}+\mathcal{L}_{22}U_{22}) \nonumber\\
        D&=&\mathcal{L}_{11}U_{12}-\mathcal{L}_{11}U_{11}B^{\dag}C \nonumber\\
        E&=&(\mathcal{L}_{21}U_{12}+\mathcal{L}_{22}U_{22})-\mathcal{L}_{21}U_{11}B^{\dag}C \nonumber\\
        F&=&D^{T}D+E^{T}E \nonumber\\
        G&=&B^{\dag}C(I-F^{\dag}F) \nonumber\\
        H&=&(I-F^{\dag}F)(I+G^{T}G)^{-1}(B^{\dag}C)^{T}B^{\dag}((\mathcal{L}_{11}U_{11})^{T}-CF^{\dag}D^{T}) \nonumber\\
        K&=&(I-F^{\dag}F)(I+G^{T}G)^{-1}(B^{\dag}C)^{T}B^{\dag}((\mathcal{L}_{21}U_{11})^{T}-CF^{\dag}E^{T}).
    \end{eqnarray}
    \begin{proof} \normalfont
    The $MP$-inverse of $S$ is going to be computed by using Theorem \ref{Theorem 3.8}. The procedures are derived as follows.\\
    Step 1: Constructing $W$ and $V$.\\
    Suppose \[
W=\begin{bmatrix}
\mathcal{L}_{11}U_{11} & O \\ \mathcal{L}_{21}U_{11} & O
\end{bmatrix} ~\mbox{and}~
V=\begin{bmatrix}
    O & \mathcal{L}_{11}U_{12} \\ O & \mathcal{L}_{21}U_{12}+\mathcal{L}_{22}U_{22}
\end{bmatrix}.
\]
Clearly it satisfies $WV^{T}=O$.
By Theorem \ref{Theorem 3.8}, $S^{\dag}$ can be expressed as
 \begin{eqnarray}S^{\dag}=W^{\dag}+(I-W^{\dag}V)[M^{\dag}+(I-M^{\dag}M)NV^{T}(W^{\dag})^{T}W^{\dag}(I-VM^{\dag})] \label{final},\end{eqnarray}
    where 
    \begin{eqnarray}
        M&=&(I-WW^{\dag})V \label{M}\\
        N&=&[I+(I-M^{\dag}M)V^{T}(W^{\dag})^{T}W^{\dag}V(I-M^{\dag}M)]^{-1} \label{3.7}.
    \end{eqnarray}
Step 2: Computing of $W^{\dag}$ and $M$\\
Since the second block-column of $W$ is zero, using Theorem \ref{Lemma 3.7}, we obtain
\begin{eqnarray}
W^{\dag}=\begin{bmatrix}
B^{\dag}(\mathcal{L}_{11}U_{11})^{T} & B^{\dag}(\mathcal{L}_{21}U_{11})^{T} \nonumber \\ O & O
\end{bmatrix},
\end{eqnarray}  
where $B=(\mathcal{L}_{11}U_{11})^{T}\mathcal{L}_{11}U_{11}+(\mathcal{L}_{21}U_{11})^{T}\mathcal{L}_{21}U_{11}.$ Using eqn. (\ref{M}), we compute
\begin{eqnarray}
M=V-WW^{\dag}V
=\begin{bmatrix}
   O & \mathcal{L}_{11}U_{12}-\mathcal{L}_{11}U_{11}B^{\dag}C \\ O & (\mathcal{L}_{21}U_{12}+\mathcal{L}_{22}U_{22})-\mathcal{L}_{21}U_{11}B^{\dag}C 
\end{bmatrix}=\begin{bmatrix}
        O & D \\ O & E
    \end{bmatrix}, \label{3.8}
\end{eqnarray}
where the intermediate block matrices are defined as \begin{eqnarray*}
    C&=&(\mathcal{L}_{11}U_{11})^{T}\mathcal{L}_{11}U_{12}+(\mathcal{L}_{21}U_{11})^{T}(\mathcal{L}_{21}U_{12}+\mathcal{L}_{22}U_{22}) \nonumber\\
        D&=&\mathcal{L}_{11}U_{12}-\mathcal{L}_{11}U_{11}B^{\dag}C \nonumber\\
        E&=&(\mathcal{L}_{21}U_{12}+\mathcal{L}_{22}U_{22})-\mathcal{L}_{21}U_{11}B^{\dag}C. \nonumber
\end{eqnarray*}
Step 3: Computation of $M^{\dag}$ and $N$.\\
Applying Theorem \ref{Lemma 3.6} on $M^T$ and by using the property $((M^{T})^T)^{\dag}=((M^{T})^{\dag})^T=M^{\dag},$ we obtain $MP$-inverse of $M$ as $M^{\dag}=\begin{bmatrix}
  O & O \\ F^{\dag}D^T &  F^{\dag}E^T
\end{bmatrix},$
where the matrix $F$ is defined by $F=D^{T}D+E^{T}E.$ Next, consider the expression
\begin{eqnarray*}
    W^{\dag}V(I-M^{\dag}M)&=&
     \begin{bmatrix}
        O & B^{\dag}C \\ O & O
    \end{bmatrix}
     \begin{bmatrix}
        I & O \\ O & I-F^{\dag}F
    \end{bmatrix}
    =
     \begin{bmatrix}
        O & G \\ O & O
    \end{bmatrix},\\
\end{eqnarray*}
where $G=B^{\dag}C(I-F^{\dag}F).$
Thus, we obtain the final structure of $N$ as
\begin{eqnarray}
N&=&\begin{bmatrix}
\begin{bmatrix}
        I & O \\ O & I
    \end{bmatrix}+ 
    \begin{bmatrix}
        I & O \\ O & I-F^{\dag}F
    \end{bmatrix}
    \begin{bmatrix}
        O &  O\\ (B^{\dag}C)^{T} & O
    \end{bmatrix}
     \begin{bmatrix}
        O & G \\ O & O
    \end{bmatrix}
\end{bmatrix}^{-1} \nonumber  
=
\begin{bmatrix}
        I & O \\ O & I+G^{T}G
    \end{bmatrix}^{-1}.  \label{3.9}
    \end{eqnarray}
    Moreover, $G^{T}G$ is a positive semi-definite matrix, which implies $(I+G^TG)^{-1}$ exists.\\
Step 4: Final expression of $S^{\dag}$.\\
By combining steps 2 and 3, we obtain
\begin{eqnarray*}
W^{\dag}(I-VM^{\dag})&=&\begin{bmatrix}
B^{\dag}(\mathcal{L}_{11}U_{11})^{T} & B^{\dag}(\mathcal{L}_{21}U_{11})^{T} \\ O & O
\end{bmatrix}-
\begin{bmatrix}
        O & B^{\dag}C \\ O & O
    \end{bmatrix}
    \begin{bmatrix}
  O & O \\ F^{\dag}D^T &  F^{\dag}E^T
\end{bmatrix}\\
&=&\begin{bmatrix}
B^{\dag}[(\mathcal{L}_{11}U_{11})^{T}-CF^{\dag}D^{T}] &B^{\dag}[(\mathcal{L}_{21}U_{11})^{T}-CF^{\dag}E^{T}] \\ O & O
\end{bmatrix}.
\end{eqnarray*}
Hence, on substituting this expression, we get
\begin{eqnarray} \label{3.10}
M^{\dag}+(I-M^{\dag}M)NV^{T}(W^{\dag})^{T}W^{\dag}(I-VM^{\dag})=\begin{bmatrix}
O & O\\ H+F^{\dag}D^{T} & K+F^{\dag}E^{T}
\end{bmatrix},
\end{eqnarray}
where $H=(I-F^{\dag}F)(I+G^{T}G)^{-1}(B^{\dag}C)^{T}B^{\dag}((\mathcal{L}_{11}U_{11})^{T}-CF^{\dag}D^{T})$ and\\ $K=(I-F^{\dag}F)(I+G^{T}G)^{-1}(B^{\dag}C)^{T}B^{\dag}((\mathcal{L}_{21}U_{11})^{T}-CF^{\dag}E^{T})$.
Finally substituting all the components into eqn. (\ref{final}), we obtain
\begin{eqnarray*}
    S^{\dag}
    &=&
    \begin{bmatrix}
B^{\dag}(\mathcal{L}_{11}U_{11})^{T} & B^{\dag}(\mathcal{L}_{21}U_{11})^{T} \\ O & O
\end{bmatrix}+
\begin{bmatrix}
I & -B^{\dag}C \\ O & I
\end{bmatrix}
\begin{bmatrix}
O & O\\ H+F^{\dag}D^{T} & K+F^{\dag}E^{T}
\end{bmatrix}\\
&=&
\begin{bmatrix}
B^{\dag}[(\mathcal{L}_{11}U_{11})^{T}-C(H+F^{\dag}D^{T})] & B^{\dag}[(\mathcal{L}_{21}U_{11})^{T}-F(K+F^{\dag}E^{T})] \\ H+F^{\dag}D^{T} & K+F^{\dag}E^{T}
\end{bmatrix},
\end{eqnarray*}
which completes the proof.
\end{proof}
\end{theorem}
The result thus obtained in Theorem \ref{Theorem 3.9} can be further simplified under additional range conditions on the matrix $S$. The following corollary provides a reduced structure of the MP-inverse.
\begin{corollary}\normalfont
   Consider a matrix $A \in \mathbb{R}^{m\times n}$, and $S=LP^{-1}U$ be the block structure obtained from Lemma \ref{L_U} and \ref{Lemma LPU}. If $Q$ is a permutation matrix such that
$QL=\begin{bmatrix}
    L_{\mathtt{r}} & \mathcal{O} \\ K & \mathcal{O}
\end{bmatrix}$ and $\mathcal{R}(UU^T\mathcal{L}^T) \subseteq \mathcal{R}(\mathcal{L}^T)$ then the $MP-$inverse using  Theorem \ref{Theorem 3.9} is given as
    \[ 
    S^{\dag}=U^{-1}P\begin{bmatrix}
    B^{-1}{L_{\mathtt{r}}}^{T} & B^{-1}K^T \\ \mathcal{O} & \mathcal{O}
    \end{bmatrix}Q,
    \]where $L_{\mathtt{r}} \in \mathbb{R}_{\mathtt{r}}^{r\times r}, K \in \mathbb{R}^{(m-r)\times r}$, $B= {L_{\mathtt{r}}}^TL_{\mathtt{r}}+K^TK$.
\end{corollary}
The following theorem provides a direct method for $MP-$inverse of the block structured matrix with full-column rank.
\begin{theorem} \normalfont
    Suppose the block structured matrix $S \in \mathbb{R}^{2m \times 2n}$ corresponding to the FLS (\ref{fls}) is of full-column rank with $\mathcal{R}(S_2) \subseteq \mathcal{R}(S_1)$. Let $S=LP^{-1}U$  be the decomposed structure obtained from Lemma \ref{L_U} and \ref{Lemma LPU}, then  the $MP-$inverse of $S$ is given $S^{\dag}=U^{-1}P(L^TL)^{-1}L^T.$ 
    \begin{proof}
        Since $U$ is non-singular and $S$ is of full-column rank, which implied $L$ is of full column rank. Thus, we obtain \begin{eqnarray*}
         S^{\dag}&=&(LP^{-1}U)^{\dag}=((LP^{-1}U)^TLP^{-1}U)^{-1}(LP^{-1}U)^T=(U^TPL^TLP^{-1}U)^{-1}(LP^{-1}U)^T\\&=&U^{-1}P(L^TL)^{-1}P^{-1}(U^T)^{-1}U^TPL^T=U^{-1}P(L^TL)^{-1}L^T.
        \end{eqnarray*}
    \end{proof}
\end{theorem}
\iffalse
\begin{theorem} \normalfont
\textcolor{red}{Suppose $A \in {\mathbb{R}}^{m\times n}$ and $S=LP^{-1}U$, obtained from Lemma \ref{L_U} and \ref{Lemma LPU}. If $Q$ is a permutation matrix such that
$QL=\begin{bmatrix}
    L_{\mathtt{r}} & \mathcal{O} \\ K & \mathcal{O}
\end{bmatrix}$ and $\mathcal{R}(UU^T\mathcal{L}^T) \subseteq \mathcal{R}(\mathcal{L}^T)$ then
    \[ 
    S^{\dag}=U^{-1}PE^{\dag}Q,
    \] where $L_{\mathtt{r}} \in \mathbb{R}_{\mathtt{r}}^{r\times r}, K \in \mathbb{R}^{(m-r)\times r}$, $E^{\dag}=\begin{bmatrix}
    L_{\mathtt{r}}^{-1}F &L_{\mathtt{r}}^{-1}F(KL_{\mathtt{r}}^{-1})^{T}\\ \mathcal{O} &\mathcal{O}\end{bmatrix}$ and $F=[I_{\mathtt{r}}+(KL_{\mathtt{r}}^{-1})^{T}(KL_{\mathtt{r}}^{-1})]^{-1}$.
    \begin{proof}
        Since $\mathcal{R}(UU^T\mathcal{L}^T) \subseteq \mathcal{R}(\mathcal{L}^T)$ and $U$ is non-singular which implies $(\mathcal{L}U)^{\dag}=U^{-1}\mathcal{L}^{\dag}$. The proof follows along the same lines as Theorem \ref{(1,2,3)}. \end{proof}}
    \end{theorem}
\fi
Although, we present the computation of generalized inverses using $\mathcal{L}U$ decomposition, this relies on specific range conditions. To deal general rectangular matrices without any restrictions, we constructed an alternative direct decomposition. The following theorem provides the computation of $MP-$inverse using full rank decomposition without any range conditions. Initially, we begin by constructing a full rank decomposition of $S.$ Since computing the full rank decomposition of a $2m \times 2n$ matrix takes a high operational count, we propose a modified construction in which two full rank decomposition of $n \times n$ matrices are required instead. \\To construct this decomposition of the block structured matrix $S$ defined in eqn. (\ref{S}), we consider the full rank decomposition of $S_{1}$ and $S_1-S_2(S_1)^{\dag}S_2=F_{22}G_{22}$, given by $S_1=F_{11}G_{11}$ and $S_1-S_2(S_1)^{\dag}S_2=F_{22}G_{22},$ where $F_{11} \in \mathbb{R}^{m \times {\mathtt{r}}_1}$, $F_{22} \in \mathbb{R}^{m \times {\mathtt{r}}_2}$, $G_{11} \in \mathbb{R}^{{\mathtt{r}}_1 \times n}$ and $G_{22} \in \mathbb{R}^{{\mathtt{r}}_2 \times n}$, ${\mathtt{r}}_1=rank(S_1)$ and ${\mathtt{r}}_2=rank(S_1-S_2(S_1)^{\dag}S_2)$. Using the above construction, we obtain the following result. 
\begin{theorem} \normalfont
Let us consider a $2m \times 2n$ block structured matrix $S=\begin{bmatrix}
    S_1 & S_2 \\ S_2 & S_1
\end{bmatrix}$ corresponding to a $m \times n$ FLS defined in (\ref{fls}). Let $S_1=F_{11}G_{11}$ and $S_1-S_2(S_1)^{\dag}S_2=F_{22}G_{22}$ be a full rank decompositions. Then, the full rank decomposition of $S$ is  \begin{eqnarray}
S=\mathcal{F} \mathcal{G}, \label{FG}    
\end{eqnarray} where $\mathcal{F}=\begin{bmatrix}
    F_{11} & \mathcal{O} \\ F_{21} & F_{22}
\end{bmatrix} \in \mathbb{R}^{2m \times ({\mathtt{r}}_1+{\mathtt{r}}_2)}\text{ and }\mathcal{G}=\begin{bmatrix}
    G_{11} & G_{12} \\ \mathcal{O} & G_{22}
\end{bmatrix}\in \mathbb{R}^{({\mathtt{r}}_1+{\mathtt{r}}_2) \times 2n}.$
%The off-diagonal matrices are obtained by solving $S_2=F_{21}G_{11}$ and $S_2=F_{11}G_{12}$ and hence it is given as \[F_{21}=S_2G_{11}^T(G_{11}G_{11}^T)^{-1} \text{ and }G_{12}=(F_{11}^TF_{11})^{-1}F_{11}^TS_2.\]
Consequently, the $MP-$inverse of $S$ is given as \begin{eqnarray}
S^{\dag}=\mathcal{G}^{\dag}\mathcal{F}^{\dag},    
\end{eqnarray} where \begin{eqnarray*}
    \mathcal{F}^{\dag}&=&\begin{bmatrix}
    A^{-1}_{1}F_{11}^T-A^{-1}_{1}F_{21}^TF_{22}D^{-1}_{1}B^T_{1} & A^{-1}_{1}F_{21}^T-A^{-1}_{1}F_{21}^TF_{22}D^{-1}_{1}C^T_{1} \\ D^{-1}_{1}B^T_{1} & D^{-1}_{1}C^T_{1}
\end{bmatrix}, \\
\mathcal{G}^{\dag}&=& \begin{bmatrix}
G_{11}^TA^{-1}_{2}-B^T_{2}D^{-1}_{2}G_{22}G_{12}^TA^{-1}_{2} & B^T_{2}D^{-1}_{2} \\ G_{12}^TA^{-1}_{2}-C^T_{2}D^{-1}_{2}G_{22}G_{12}^TA^{-1}_{2} & C^T_{2}D^{-1}_{2}
\end{bmatrix}
\end{eqnarray*}
and the blocks $A_{1},B_{1},C_{1},D_{1},A_{2},B_{2},C_{2},D_{2}$ are defined by:
\begin{eqnarray*}
 &&A_{1}=F^T_{11}F_{11}+F^T_{21}F_{21},~~~~~~~~B_{1}=-F_{11}A^{-1}_{1}F^T_{21}F_{22},\\
 &&C_{1}=F_{22}-F_{21}A^{-1}_{1}F^T_{21}F_{22},~~~ D_{1}=B^T_{1}B_{1}+C^T_{1}C_{1},\\
 &&A_{2}=G_{11}G^T_{11}+G_{12}G^T_{12},~~~~~~ B_{2}=-G^T_{11}A^{-1}_{2}G_{12}G_{22}^T,\\
 &&C_{2}=G_{22}^T-G^T_{12}A^{-1}_{2}G_{12}G^T_{22},~ D_{2}=B^T_{2}B_{2}+C^T_{2}C_{2}.
\end{eqnarray*}
\begin{proof}
    In order to construct a full rank decomposition of $S$, we construct the block matrices $\mathcal{F}$ and $\mathcal{G}$ as follows: \[\begin{bmatrix}
    S_1 & S_2 \\ S_2 & S_1
\end{bmatrix}=\begin{bmatrix}
    F_{11} & \mathcal{O} \\ F_{21} & F_{22}
\end{bmatrix}\begin{bmatrix}
    G_{11} & G_{12} \\ \mathcal{O} & G_{22}
\end{bmatrix}.\]
 Comparing the corresponding blocks, we get
\begin{eqnarray}
    S_1 &=& F_{11}G_{11},~~S_2 = F_{21}G_{11} \implies F_{21}=S_2 G^{\dag}_{11}, \nonumber\\
    S_2 &=& F_{11}G_{12} \implies G_{12}= F^{\dag}_{11}S_2,~~S_1 = F_{21}G_{12}+F_{22}G_{22}. \label{FG1} 
    \end{eqnarray}  
Since $S_1=F_{11}G_{11}$ is a full rank decomposition,  the off-diagonal block matrices $F_{21}$ and $G_{12}$ are obtained as \[F_{21}=S_2 G_{11}^T(G_{11}G_{11}^T)^{-1} \text{ and }G_{12}=(F_{11}^T F_{11})^{-1}F_{11}^T S_2.\]  Now, consider the block matrix $\mathcal{F}=\begin{bmatrix} 
    F_{11} & \mathcal{O} \\ F_{21} & F_{22}
\end{bmatrix}$. As $F_{11}$ and $F_{22}$ are of full column rank and $\mathcal{F}$ is a lower triangular block matrix with upper right block as zero, the columns of $\begin{bmatrix}
    F_{11} \\ F_{21}
\end{bmatrix}$ and $\begin{bmatrix}
    \mathcal{O} \\ F_{22}
\end{bmatrix}$ are linearly independent. Therefore, the obtained $\mathcal{F}$ is a full column rank matrix. Similarly, the block matrix $\mathcal{G}=\begin{bmatrix}
    G_{11} & G_{12} \\ \mathcal{O} & G_{22}
\end{bmatrix}$ is a full row rank matrix. Therefore, by definition of full rank decomposition, the obtained decomposition $S=\mathcal{F}\mathcal{G}$ is a full rank decomposition of the block structured matrix $S.$ Using, the approach of Theorem \ref{Theorem 3.9}, we obtain $\mathcal{F}^{\dag}$. As, $D_{1}$ is the Schur complement of $A_{1}$ in the non-singular matrix $\mathcal{F}^T\mathcal{F}$, so $D_{1}^{-1}$ exists. Similarly, $\mathcal{G}^{\dag}$ follows from $\mathcal{G}^{\dag}=((\mathcal{G}^T)^{\dag})^T$.
\end{proof}
\end{theorem}  
\
Using the $MP-$inverse obtained from the above theorems, Theorem \ref{minimum-norm} provides the existence of strong or weak minimum norm least squares solution.
     \begin{theorem} \normalfont \label{minimum-norm}
        Consider the FLS (\ref{fls}) and let $SZ=B$ be the corresponding crisp system. Then the fuzzy system (\ref{fls}) admits a strong or weak fuzzy minimum norm least squares solution  $Z=S^{\dag}B+(I-S^{\dag}S)h,$ h is arbitrary.
    \end{theorem}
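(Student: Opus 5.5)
The approach mirrors the proof of the $\{1,2,3\}$ least-squares theorem: first show that every vector $Z=S^{\dag}B+(I-S^{\dag}S)h$ satisfies the normal equations of the crisp system $SZ=B$. Then identify $S^{\dag}B$ as the minimum-norm member of this family. Finally, invoke Definition \ref{weak and strong} to classify the induced fuzzy vector as strong or weak. Throughout, I use only the four Penrose equations of Definition \ref{MP-inverse}. Existence and uniqueness of $S^{\dag}$ may be taken as given, either from Definition \ref{MP-inverse} or constructively from Theorem \ref{Theorem 3.9} and the full rank decomposition $S=\mathcal{F}\mathcal{G}$.

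\emph{Step 1 (least squares).} Compute $S^TSZ=S^TSS^{\dag}B+S^T(S-SS^{\dag}S)h$. The second term vanishes because $SS^{\dag}S=S$. For the first term, the symmetry $(SS^{\dag})^T=SS^{\dag}$ gives $S^TSS^{\dag}=S^T(SS^{\dag})^T=(SS^{\dag}S)^T=S^T$. Hence $S^TSZ=S^TB$ for every $h$, so each such $Z$ minimizes $\|SZ-B\|_2$. Conversely, any least-squares solution $Z'$ satisfies $S^TS(Z'-S^{\dag}B)=\mathbf{O}$, which forces $S(Z'-S^{\dag}B)=\mathbf{O}$. Taking $h=Z'$, we get $(I-S^{\dag}S)Z'=Z'-S^{\dag}SZ'=Z'-S^{\dag}SS^{\dag}B=Z'-S^{\dag}B$, using $S^{\dag}SS^{\dag}=S^{\dag}$. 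So the stated formula describes exactly the set of least-squares solutions.

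\emph{Step 2 (minimum norm).} The two pieces $S^{\dag}B$ and $(I-S^{\dag}S)h$ are orthogonal. Indeed, $(S^{\dag}B)^T(I-S^{\dag}S)h=B^T(S^{\dag})^T(I-S^{\dag}S)h$, and the symmetry $(S^{\dag}S)^T=S^{\dag}S$ gives $(S^{\dag})^TS^{\dag}S=(S^{\dag}SS^{\dag})^T=(S^{\dag})^T$, so $(S^{\dag})^T(I-S^{\dag}S)=\mathcal{O}$ and the inner product is zero. Pythagoras then yields $\|Z\|_2^2=\|S^{\dag}B\|_2^2+\|(I-S^{\dag}S)h\|_2^2$. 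Thus the minimum norm is attained precisely when $(I-S^{\dag}S)h=\mathbf{O}$, i.e. at the unique vector $Z=S^{\dag}B$. This is the step that uses the fourth Penrose condition, which is exactly what distinguishes $S^{\dag}$ from the $\{1,2,3\}$-inverse used earlier.

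\emph{Step 3 (fuzzy interpretation).} Write $Z=(\underline{z}_1(\alpha),\dots,\underline{z}_n(\alpha),-\overline{z}_1(\alpha),\dots,-\overline{z}_n(\alpha))^T$ as in Definition \ref{embedding}, with $B$ depending on $\alpha$ through $\underline{b}_i(\alpha),\overline{b}_i(\alpha)$. Since $S^{\dag}$ is a fixed linear map independent of $\alpha$, the components are well-defined bounded functions of $\alpha$. Forming $W$ as in Definition \ref{weak and strong} produces a fuzzy solution. It is strong when each pair $(\underline{z}_i,\overline{z}_i)$ is a fuzzy number, that is, $\underline{z}_i$ is non-decreasing, $\overline{z}_i$ is non-increasing, and $\underline{z}_i\le\overline{z}_i$; otherwise it is weak.

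The main obstacle is this last step. The minimum-norm property belongs to the crisp vector $Z$, and the $\min/\max$ construction of $W$ can alter it. I would therefore state the extremal property for $Z$ itself, as in the $\{1,2,3\}$ theorem, and note that in the strong case $W=Z$ inherits it exactly. Here, as there, the dichotomy is definitional, so no monotonicity conditions need to be verified.
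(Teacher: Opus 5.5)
Your proof is correct. The paper states this theorem without any proof. The intended justification is evidently the one given for the $\{1,2,3\}$-inverse theorem: check that $Z$ satisfies $S^TSZ=S^TB$, then invoke Definition \ref{weak and strong}. Your Steps 1 and 3 follow exactly that template.

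What you add is what the paper never justifies:
\begin{enumerate}
\item[(a)] the converse, showing that $S^{\dag}B+(I-S^{\dag}S)h$ sweeps out the entire least-squares set;
\item[(b)] the orthogonal splitting via Penrose conditions 2 and 4, showing that $S^{\dag}B$ is the unique minimum-norm member of that set.
\end{enumerate}
Together these make explicit that the statement's phrase \emph{minimum norm} applies to the displayed family only when $(I-S^{\dag}S)h=\mathbf{O}$. So your reading sharpens the theorem rather than departing from it. Your caveat is also well taken: in the weak case, the $\min/\max$ construction of $W$ need not preserve the extremal property of $Z$.

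One small correction to your opening remarks. Do not rely on Theorem \ref{Theorem 3.9} or the paper's full rank construction $S=\mathcal{F}\mathcal{G}$ for the existence of $S^{\dag}$. Both implicitly require $\mathcal{R}(S_2)\subseteq\mathcal{R}(S_1)$. For example, $A=[-1]$ gives $S_1=0$ and $S_2=1$, and then $\mathcal{F}\mathcal{G}$ is the zero matrix rather than $S$. The existence and uniqueness recorded in Definition \ref{MP-inverse} (Penrose's theorem) is all you need.
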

Theorem \ref{Theorem strong 1} and \ref{Theorem strong 2} offer the necessary and sufficient condition of the system to have a strong least squares solution. In the following theorems, $(S_1+S_2)^{GI}$ denote the generalized inverse of the matrix $S_1+S_2,$ which includes $\{1,2,3\}-$inverse and $MP-$inverse.
\begin{theorem} \normalfont \label{Theorem strong 1}
    Consider an FLS (\ref{fls}) and assume that $S$ is of the form (\ref{S}). Then the system (\ref{fls}) has a strong least squares solution if and only if the following conditions hold for every $\alpha \in [0,1]$:
    \begin{enumerate}
       \item[(i)]  $(S_1+S_2)^{GI}(\overline{b}(\alpha)-\underline{b}(\alpha))\geq \mathbf{O}$
        \item[(ii)] $(S_1+S_2)^{GI}\frac{d}{d \alpha}(\underline{b}(\alpha)) \geq \mathbf{O}$
        \item[(iii)] $(S_1+S_2)^{GI}\frac{d}{d \alpha}(\overline{b}(\alpha)) \leq \mathbf{O}.$
    \end{enumerate} 
    \begin{proof}
   Consider $SZ=B$.
    On expanding, we have
    \begin{eqnarray}
    S_1 \underline{z}(\alpha) - S_2 \overline{z}(\alpha) &=& \underline{b}(\alpha), \nonumber \\
    S_2 \underline{z}(\alpha) - S_1 \overline{z}(\alpha) &=& -\overline{b}(\alpha). \label{strong}
    \end{eqnarray}
Combining these equations, it follows that $(S_1+S_2)(\overline{z}(\alpha)-\underline{z}(\alpha))=(\overline{b}(\alpha)-\underline{b}(\alpha))$, which implies
    \begin{eqnarray}(\overline{z}(\alpha)-\underline{z}(\alpha))=(S_1+S_2)^{GI}(\overline{b}(\alpha)-\underline{b}(\alpha)). \label{converse}
   \end{eqnarray}
   Also (ii) and (iii) hold for all $\alpha \in [0,1],$ which implies $\underline{z}(\alpha)$ and $\overline{z}(\alpha)$ are non-decreasing and non-increasing functions respectively over $[0,1]$.
    Moreover $(S_1+S_2)^{GI}(\overline{b}(\alpha)-\underline{b}(\alpha))\geq \mathbf{O},$ which gives $(\overline{z}(\alpha)-\underline{z}(\alpha)) \geq \mathbf{O}$. Conversely, from eqn. (\ref{converse}) and (\ref{strong}), the result is obvious.
\end{proof}
\end{theorem}

\begin{theorem} \normalfont \label{Theorem strong 2}
    Let us consider an FLS (\ref{fls}) and assume that $S$ is of the form (\ref{S}). Then the system (\ref{fls}) has a strong least squares solution if and only if the following conditions hold for every $\alpha_1,\alpha_2 \in [0,1]$:
    \begin{enumerate}
       \item[(i)]  $(S_1+S_2)^{GI}(\overline{b}(\alpha_1)-\underline{b}(\alpha_1))\geq \mathbf{O}$
        \item[(ii)] for any $\alpha_1<\alpha_2$, $(S_1+S_2)^{GI}(\overline{b}(\alpha_1)-\overline{b}(\alpha_2)) \geq \mathbf{O}$
        \item[(iii)] for any $\alpha_1<\alpha_2$, $(S_1+S_2)^{GI}(\underline{b}(\alpha_1)-\underline{b}(\alpha_2)) \leq \mathbf{O}$.
    \end{enumerate} 
    The proof follows similarly to that of Theorem \ref{Theorem strong 1}.
    \end{theorem}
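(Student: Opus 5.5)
Our plan mirrors the proof of Theorem \ref{Theorem strong 1}, replacing the pointwise derivative conditions by difference conditions on pairs of $\alpha$-levels, so that no differentiability of $\underline{b},\overline{b}$ is needed. As there, we expand $SZ=B$ into the two block equations (\ref{strong}):
\[
S_1\underline{z}(\alpha)-S_2\overline{z}(\alpha)=\underline{b}(\alpha),\qquad S_2\underline{z}(\alpha)-S_1\overline{z}(\alpha)=-\overline{b}(\alpha).
\]
Subtracting the second from the first and changing sign gives $(S_1+S_2)(\overline{z}(\alpha)-\underline{z}(\alpha))=\overline{b}(\alpha)-\underline{b}(\alpha)$. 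Taking the least squares (generalized inverse) solution yields eqn. (\ref{converse}) at $\alpha=\alpha_1$. Condition (i) then gives $\overline{z}(\alpha_1)\ge\underline{z}(\alpha_1)$ for all $\alpha_1$, which is the ordering requirement of a fuzzy number.

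For monotonicity, fix $\alpha_1<\alpha_2$ and subtract the system (\ref{strong}) at $\alpha_2$ from that at $\alpha_1$. With $\Delta\underline{z}=\underline{z}(\alpha_1)-\underline{z}(\alpha_2)$ and $\Delta\overline{z}$ defined analogously, this gives
\[
S_1\Delta\underline{z}-S_2\Delta\overline{z}=\Delta\underline{b},\qquad S_2\Delta\underline{z}-S_1\Delta\overline{z}=-\Delta\overline{b}.
\]
Each increment must then be expressed through $(S_1+S_2)^{GI}$ applied to the corresponding increment of $\underline{b}$ or $\overline{b}$, as the proof of Theorem \ref{Theorem strong 1} does implicitly in passing from (ii), (iii) to the monotonicity of $\underline{z}$, $\overline{z}$. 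Granting that correspondence, (iii) gives $\Delta\underline{z}\le\mathbf{O}$, so $\underline{z}$ is non-decreasing, and (ii) gives $\Delta\overline{z}\ge\mathbf{O}$, so $\overline{z}$ is non-increasing. Combining this with (i), each $(\underline{z}_i,\overline{z}_i)$ is a fuzzy number, so $W=Z$ in Definition \ref{weak and strong} and the solution is strong. For the converse we run the same identities backwards. If the least squares solution is strong, the ordering and monotonicity of $\underline{z},\overline{z}$ together with (\ref{converse}) and the difference form of (\ref{strong}) return (i)--(iii), exactly as in Theorem \ref{Theorem strong 1}.

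\textbf{Main obstacle.} The increment correspondence is not a formal consequence of the algebra above. Adding the two difference equations only gives
\[
(S_1+S_2)(\Delta\overline{z}-\Delta\underline{z})=\Delta\overline{b}-\Delta\underline{b},
\]
which controls the combination $\Delta\overline{z}-\Delta\underline{z}$, not $\Delta\underline{z}$ and $\Delta\overline{z}$ separately. Separating them needs the other combination, which involves $(S_1-S_2)=A$ and the sum $\Delta\underline{z}+\Delta\overline{z}$.

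To close this I would first try to show that, under (i)--(iii), the least squares solution can be chosen with $\Delta\underline{z}+\Delta\overline{z}$ determined by $A^{GI}(\Delta\underline{b}+\Delta\overline{b})$ consistently with the signs. If that fails, I would adopt the same implicit convention as Theorem \ref{Theorem strong 1}, namely that the sign conditions are read componentwise on the decoupled lower and upper parts. Under that convention the argument reduces verbatim to the earlier proof with derivatives replaced by finite differences.
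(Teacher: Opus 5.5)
Your outline follows the paper's own route. The paper proves this theorem only by pointing to Theorem \ref{Theorem strong 1}. That proof obtains (\ref{converse}), then simply asserts that (ii) and (iii) make $\underline{z}$ non-decreasing and $\overline{z}$ non-increasing, and calls the converse ``obvious''. The increment correspondence you isolate is exactly that asserted step, and your diagnosis is right. (A small slip: it is adding the two block equations, not subtracting them, that gives the $(S_1+S_2)$ identity; subtracting gives $A(\underline{z}+\overline{z})=\underline{b}+\overline{b}$ with $A=S_1-S_2$.)

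\textbf{The gap.} ``Granting'' the correspondence leaves a genuine gap, and it cannot be filled. Conditions (i)--(iii) involve only $(S_1+S_2)^{GI}$, and the equivalence fails in both directions. Take $A=\begin{bmatrix}1&-1\\0&1\end{bmatrix}$. Then $S_1=I$, $S_2=\begin{bmatrix}0&1\\0&0\end{bmatrix}$ and $(S_1+S_2)^{-1}=\begin{bmatrix}1&-1\\0&1\end{bmatrix}$. Also $\det S=\det(S_1+S_2)\det(S_1-S_2)=1$, so every generalized inverse is the ordinary inverse and the least squares solution is the unique solution of $SZ=B$.

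\emph{The ``if'' direction fails.} Let $\underline{b}(\alpha)=(0,0)^T$ and $\overline{b}(\alpha)=(2-2\alpha,1-\alpha)^T$. Then $(S_1+S_2)^{-1}(\overline{b}(\alpha)-\underline{b}(\alpha))=(1-\alpha)(1,1)^T\geq\mathbf{O}$ and $(S_1+S_2)^{-1}(\overline{b}(\alpha_1)-\overline{b}(\alpha_2))=(\alpha_2-\alpha_1)(1,1)^T\geq\mathbf{O}$, while (iii) holds trivially. Yet solving $SZ=B$ gives $\tilde{z}_1=(1-\alpha,2-2\alpha)$ and $\tilde{z}_2=(0,1-\alpha)$. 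Here $\underline{z}_1$ is decreasing, so the solution is only weak.

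\emph{The ``only if'' direction fails.} Let $\underline{b}(\alpha)=(\alpha,2\alpha)^T$ and $\overline{b}(\alpha)=(4-2\alpha,2)^T$. The solution $\tilde{z}_1=(2+\alpha,4)$, $\tilde{z}_2=(2\alpha,2)$ is strong. But $(S_1+S_2)^{-1}(\underline{b}(\alpha_1)-\underline{b}(\alpha_2))=(\alpha_2-\alpha_1)(1,-2)^T$ violates (iii). The same data also refute Theorem \ref{Theorem strong 1}.

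\textbf{Your fallbacks.} Neither one rescues the argument. When $S$ is nonsingular there is no least squares solution left to choose. Moreover, $A^{GI}(\Delta\underline{b}+\Delta\overline{b})$ is not constrained at all by (i)--(iii). The componentwise convention amounts to assuming $\Delta\underline{z}=(S_1+S_2)^{GI}\Delta\underline{b}$ and $\Delta\overline{z}=(S_1+S_2)^{GI}\Delta\overline{b}$. That holds when the entries of $A$ all have one sign ($S_2=O$ or $S_1=O$), but not in general.

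\textbf{A correct version.} For an inconsistent system, the equations (\ref{strong}) and their differences do not hold, so the derivation should instead use the orthogonal decoupling $P_1^TSP_2=\mathrm{diag}(S_1+S_2,S_1-S_2)$ from Algorithm \ref{algo6}. For $Z=S^{\dag}B$ this gives $\overline{z}-\underline{z}=(S_1+S_2)^{\dag}(\overline{b}-\underline{b})$ and $\underline{z}+\overline{z}=A^{\dag}(\underline{b}+\overline{b})$. The correct criterion follows directly from these two identities. The minimum norm solution is strong if and only if (i) holds and, for all $\alpha_1<\alpha_2$,
\[
|A^{\dag}(\Delta\underline{b}+\Delta\overline{b})|\leq(S_1+S_2)^{\dag}(\Delta\overline{b}-\Delta\underline{b})
\]
componentwise. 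So any valid version of the statement must involve $A^{GI}$ (equivalently, $S^{GI}$ itself), not $(S_1+S_2)^{GI}$ alone.
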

    \iffalse
The following theorem provides a sufficient condition for the FLS to have strong fuzzy solution.
\begin{theorem} \normalfont \label{Theorem strong 3}
    \textcolor{red}{Let us consider an FLS (\ref{fls}) and assume that $S$ is of the form (\ref{S}). Then the system (\ref{fls}) has a strong minimum norm least squares solution if the rows or columns of $|A|$ has rank 1 blocks.}
    \begin{proof}
    From eqn (\ref{converse}), we have \[(\overline{z}-\underline{z})=(S_1+S_2)^{\dag}(\overline{b}-\underline{b}).\] Since the rows or columns of $|A|=S_1+S_2$ have rank 1 blocks, which implies $(S_1+S_2)^{\dag} \geq 0 (\cite{Berman}).$ Also $\tilde{b}$ is a fuzzy vector, which implies \[\overline{z}-\underline{z} \geq 0.\] Moreover, $\overline{b}$ and $\underline{b}$ are monotonically decreasing and monotonic increasing function, respectively, which implies $\overline{z}$ and $\underline{z}$ are monotonically decreasing and monotonic increasing function, respectively. Therefore, from definition (\ref{weak and strong}), $\tilde{z}$ is a strong fuzzy solution.
    \end{proof}
    \end{theorem}
\fi
    %In order to find the full rank decomposition of $S,$ we must find a full column rank matrix $\mathcal{F}\in \mathbb{R}^{2m \times r}$ and a full row rank matrix $\mathcal{G} \in \mathbb{R}^{r \times 2n}$ such that \[S=\mathcal{F} \mathcal{G},\] where $r=rank(S).$
The obtained above results provides the theoretical methodology for computation of $\{1,2\}-$inverse, $\{1,2,3\}-$inverse and $MP-$inverse. However, in practice, efficient computation of the $MP-$inverse is essential to obtain the unique minimum norm least squares solution. Thus, we establish algorithmic approaches based on matrix decompositions for practical implementation by converting the block matrix $S$ into a simpler form using orthogonal matrices. First, we present a $\mathcal{L}U$ decomposition algorithm using CRRMCF for computing $MP-$inverse under specific range conditions.
\begin{algorithm}[H]
\caption{$\mathcal{L}U$ decomposition method to compute $MP$-inverse} \label{algo6}
\begin{algorithmic}[1]
    \State Input: An $m\times n$  FLS, $A\tilde{\mathbf{z}}=\tilde{\mathbf{b}}$.
    \State Apply definition \ref{embedding} to transforms into a crisp system (\ref{SZ=B}).
    \State Construct $P_1^{T}S P_2$, where $P_1=\frac{1}{\sqrt{2}}\begin{bmatrix}
    I_m & -I_m \\ I_m & I_m
\end{bmatrix}$ and $P_2=\frac{1}{\sqrt{2}}\begin{bmatrix}
    I_n & -I_n \\ I_n & I_n
\end{bmatrix}$.
\State Compute the $\mathcal{L}U$ decomposition of $S_1+S_2=\mathcal{L}_1U_1$ and $S_1-S_2=\mathcal{L}_2U_2$.
\State Find 
permutation matrices $Q_1$ and $Q_2$ such that $S_1+S_2=Q_1^{T}L_1U_1$ and $S_1-S_2=Q_2^{T}L_2U_2$.
\If{ $\mathcal{R}(U_iU_i^T\mathcal{L}_i^T)\subseteq \mathcal{R}(\mathcal{L}_i^T), i=1,2$,}
\State The $LU$ decomposition of $P_1^{T}SP_2$ is $\begin{bmatrix}
L_1 & O \\O  & L_2  
\end{bmatrix}\begin{bmatrix}
Q_1^{T} & O \\O  & Q_2^{T}  
\end{bmatrix} \begin{bmatrix}
    U_1 & O \\ O & U_2
\end{bmatrix}.$
\State Compute $S^{\dag}=P_1\begin{bmatrix}
U_1^{-1} & O \\O  & U_2^{-1}  
\end{bmatrix}\begin{bmatrix}
    Q_{1} & O \\ O & Q_{2}
\end{bmatrix}\begin{bmatrix}
L_{1}^{\dag} & O \\ O & L_{2}^{\dag} 
\end{bmatrix}P_2^T.$
\EndIf
\end{algorithmic}
\end{algorithm}
Although, the $MP-$ inverse of $S$ can be derived directly using $LU$ decomposition Algorithm \ref{algo6} based on CRRMCF and depends on the range condition. Thus, orthogonal based decomposition such as $QR$ decomposition and singular valued decomposition ($SVD$) are often employed in numerical computations due to robustness in solving rectangular systems. Therefore, in addition to the $\mathcal{L}U$ approach, we also proposed $QR$ and $SVD$ based algorithms for computing the $MP-$inverse without any specific conditions. Now, we propose an algorithm to compute $QR$ decomposition of the block-structure matrix $S$.
\begin{algorithm}[H]
\caption{$QR$ decomposition method to compute $MP$-inverse} \label{algo3}
\begin{algorithmic}[1]
 \State Using Algorithm \ref{algo6}, determine $P_1^{T}SP_2$.
\State Compute the $QR$ decomposition of $S_1+S_2=Q_1R_1$ and $S_1-S_2=Q_2R_2$.
\State The $QR$ decomposition of $P_1^{T}SP_2$ is $\begin{bmatrix}
Q_1 & O \\O  & Q_2  
\end{bmatrix} \begin{bmatrix}
    R_1 & O \\ O & R_2
\end{bmatrix}.$
\State Compute $S^{\dag}=P_1\begin{bmatrix}
R_{1}^{\dag} & O \\ O & R_{2}^{\dag} 
\end{bmatrix}
\begin{bmatrix}
    Q_{1}^T & O \\ O & Q_{2}^T
\end{bmatrix}P_2^T.$
\end{algorithmic}
\end{algorithm}
In the following algorithm we propose $SVD$ to compute the $MP$-inverse of a block-structure matrix.
\begin{algorithm}[H] 
\caption{Least squares method for $SVD$ decomposition} \label{algo4}
\begin{algorithmic}[1]
    \State Using Algorithm \ref{algo6}, determine $P_1^{T}SP_2$.
    \State Compute $SVD$ for $S_1+S_2=U_1 \sum_{1} V_{1}^T$ and $S_1-S_2=U_2 \sum_{2} V_2^{T}$.
\State The $SVD$ for $P_1^{T}SP_2$ is $\begin{bmatrix}
U_1 & O \\O  & U_2  
\end{bmatrix} \begin{bmatrix}
    \sum_1 & O \\ O & \sum_2
\end{bmatrix}\begin{bmatrix}
V_{1}^T & O \\O  & V_{2}^T  
\end{bmatrix}.$
\State Determine $S^{\dag}=P_1\begin{bmatrix}
V_1 & O \\O  & V_2  
\end{bmatrix}\begin{bmatrix}
\sum_{1}^{\dag} & O \\ O & \sum_{2}^{\dag} 
\end{bmatrix}
\begin{bmatrix}
U_{1}^T & O \\O  & U_{2}^T  
\end{bmatrix}P_2^T.$
\end{algorithmic}
\end{algorithm}

In the following remark, we provide the operation count of the proposed direct method and its comparison with existing schemes for solving a $2n \times 2n$ crisp system.
\begin{remark} \label{OR}
    The proposed Algorithms \ref{algo6}, \ref{algo3}, \ref{algo4} are compared with the existing work of Hosseinpour \cite{svd}, where $SVD$ was employed to compute the $MP$-inverse. As reported in \cite{Matrix computation}, the operation count for their method is $48m^2n+64mn^2+72n^3$+$O(n^2).$ In contrast, the proposed methods for block-structure matrix $S$ yield a significantly lower operation count for computing the $MP$-inverse of the matrix $S$, given by
\begin{table}[H]
    \centering
\begin{tabular}{c c}
\hline
   Proposed Methods  & Operation Counts \\
   \hline
    $\mathcal{L}U$ & $6mr^2+6r^3+8nr^2+\frac{2}{3}n^3+O(n^2)$\\
    \hline
    $QR$ & $8mr^2+8nr^2+\frac{16}{3}r^3+O(n^2)$\\
     \hline
    $SVD$ & $12m^2n+16mn^2+18n^3+O(n^2)$ \\
    \hline
\end{tabular}
    \caption{Operation Count for $2m \times 2n$ matrix $S$}
\end{table}
Here, $O(n^2)$ represents the collection of all lower-order terms whose value is bounded by constant times $n^2$ and $\mathtt{r}=\min\{m,n\}$. For matrices satisfying the given range condition, $\mathcal{L}U$ can be employed because it exhibits lower computational cost. For matrices that violates the range condition, the $QR$ can be used. SVD can be used in case of handling ill-conditioned matrices. Although it has high computational cost, it provides good numerical stability for computing the solution. 
\end{remark}
%Furthermore, the operation counts for both the proposed method and existing method are evaluated, and a performance for a $1000000 \times 1000000$ is displayed in Example \ref{OR10000}.
In the next section, we have provided a few numerical examples to implement the proposed methods. The appropriate generalized inverse is computed using the direct decompositions of the block matrix as illustrated in the proposed algorithms. 
\section{Numerical Experiments} \label{Numerical Example}
All computations, including operation counts and execution times, were processed using MATLAB R2024a on a system with an Intel(R) Core(TM) i5 processor (2.10 GHz) with 16 GB RAM. Consider a singular crisp linear system of the form $SZ=B$. A vector $Z$ is said to be a least squares solution if the residual $R=SZ-B$ has the minimal Euclidean norm, {\it i.e.,} $\|R\|_2$ is minimum. The minimum norm least squares solution is given by $Z_0=S^{\dag}B$, and it satisfies the following properties: $\|SZ_0-B\| \leq \|SZ-B\|,$ for all $Z$, and $\|Z_0\|<\|Z\|,$
for any $Z \neq Z_0$. The {\it relative error} and {\it relative residual} are defined as 
$Relative~Error~(\mathcal{R}_{E})= \frac{\|Z-\tilde{Z}\|_2}{{\|Z\|}_{2}}$ and $
Relative~Residual ~(\mathcal{R}_{R})= \frac{\|B-S\tilde{Z}\|_2}{\|B\|_2}$,
where $\tilde{Z}$ denotes an approximation of the exact solution $Z$. In the following computation the exact solution is taken as the $MP$-inverse obtained in MATLAB using the command \texttt{pinv}.\\
 The established mathematical findings are now applied to practical problems where  imprecision is inherent. To illustrate this, first we consider an analog complex RLC circuit example with fuzzy voltages. Analog circuits often involve imprecise parameters such as resistance, capacitance, or voltage due to environmental factors or measurement errors. Representing these parameters as fuzzy numbers provides a more realistic analysis of a circuit's behavior.
\begin{example} \label{circuit} \label{example 3}
The following figure represents a complex analog electrical circuit consisting of voltage, resistors, capacitors, and inductors. Let $\tilde{I}_1$, $\tilde{I}_2$, $\tilde{I}_3$, $\tilde{I}_4$, and $\tilde{I}_5$ denote the current flowing in the clockwise direction. The voltages are represented using trapezoidal fuzzy numbers $(9,11,13,15)$, $(3,5,7,9)$, and $(16,18,20,22)$ to capture the uncertainties in their values caused by external factors.
\begin{figure}[H]
\centering
\begin{circuitikz} 
\draw[thick,red] (2,-5)--(5,-5);
\draw[thick,red] (2,0)--(5,0);
\draw[thick,red] (2,-5)--(2,-3);
\draw[thick,blue] (2.5,-2.5) arc[start angle=0, end angle=360, radius=0.5 cm];
\draw[thick,red] (2,-2)--(2,0);
\node at (2,-2.8){$-$};
\node at (2,-2.305){$+$};
\node[rotate=90] at (1.3,-2.5){$(9, 11, 13,15)$};
\draw[<-] (4.2,-2.5) arc[start angle=0, end angle=230, radius=0.6 cm];
\node at (3.6,-2.59){$\tilde{I}_{1}$};

\draw[thick,red] (5,-5)--(8,-5);
\draw[thick,red] (5,0)--(8,0);
\draw[thick,red] (5,-5)--(5,-4.5);
\draw[thick,blue] (5,-4.5) to [C](5,-4.2);
\draw[thick,red] (5,-4.2)--(5,-3.4);
\draw[thick,blue] (5,-3.4) to [L](5,-2.4);
\draw[thick,red] (5,-2.4)--(5,-1.7);
\draw[thick,blue] (5,-1.7) to [american resistor, R](5,-0.6);
\draw[thick,red] (5,-0.6)--(5,0);
\draw[thick,red] (8,-5)--(8,-3);
\draw[thick,blue] (8.5,-2.5) arc[start angle=0, end angle=360, radius=0.5 cm];
\draw[thick,red] (8,-2)--(8,0);
\node at (8,-2.8){$+$};
\node at (8,-2.305){$-$};
\node at (5.9,-4.3){$-5i\Omega$};
\node at (5.6,-2.9){$2i\Omega$};
\node at (5.6,-1.1){$7\Omega$};
\node[rotate=90] at (8.8,-2.5){$i(3,5,7,9)$};
\draw[<-] (7.2,-2.5) arc[start angle=0, end angle=230, radius=0.6 cm];
\node at (6.6,-2.59){$\tilde{I}_{2}$};
\draw[<-] (10.4,-2.5) arc[start angle=0, end angle=230, radius=0.6 cm];
\node at (9.8,-2.59){$\tilde{I}_{3}$};

\draw[thick,red] (8,-5)--(11,-5);
\draw[thick,red] (10,0)--(11,0);
\draw[thick,red] (8,0)--(9.5,0);
\draw[thick,blue] (9.5,0) to [C](10,0);
\draw[thick,red] (11.5,-5)--(11.5,-4.6);
\draw[thick,blue] (11.5,-4.6) to [american resistor, R](11.5,-3.5);
\draw[thick,red] (11.5,-3.5)--(11.5,-2.8);
\draw[thick,blue] (11.5,-2.8) to [L](11.5,-1.8);
\draw[thick,red] (11.5,-1.8)--(11.5,-1);
\draw[thick,blue] (11.5,-1) to [C](11.5,-0.6);
\draw[thick,red] (11.5,-0.6)--(11.5,0);
\node at (12.1,-4.){$3\Omega$};
\node at (12.1,-2.25){$6i\Omega$};
\node at (12.4,-0.8){$-i\Omega$};
\node at (9.6,0.6){$-i\Omega$};

\draw[thick,red] (11,-5)--(15,-5);
\draw[thick,red] (11,0)--(12.7,0);
\draw[thick,blue] (12.7,0) to [L](13.6,0);
\draw[thick,red] (13.6,0)--(15,0);
\draw[thick,red] (15,-5)--(15,-4);
\draw[thick,blue] (15,-4) to [C](15,-3.5);
\draw[thick,red] (15,-3.5)--(15,-2);
\draw[thick,blue] (15,-2) to [R](15,-0.9);
\draw[thick,red] (15,-0.9)--(15,0);
\node at (13.1,0.5){$4i\Omega$};
\node at (15.6,-1.3){$4\Omega$};
\node at (15.6,-3.3){$-3i\Omega$};
\draw[<-] (14,-2.5) arc[start angle=0, end angle=270, radius=0.6 cm];
\node at (13.4,-2.59){$\tilde{I}_{4}$};
\draw[thick,red] (8,0)--(8,0.75);
\draw[thick,red] (8,1.75)--(8,2.5);
\draw[thick,red] (15,0)--(15,2.5);
\draw[thick,blue] (8.5,1.25) arc[start angle=0, end angle=360, radius=0.5 cm];
\draw[<-] (12,1.25) arc[start angle=0, end angle=270, radius=0.5 cm];
\node at (11.5,1.25){$\tilde{I}_{5}$};
\draw[thick,red] (8,2.5)--(9.5,2.5);
\draw[thick,blue] (9.5,2.5)to [C](10,2.5);
\draw[thick,red] (10,2.5)--(11.1,2.5);
\draw[thick,blue] (11.1,2.5) to [R](12.2,2.5);
\draw[thick,red] (12.2,2.5)--(13,2.5);
\draw[thick,blue] (13,2.5)to [L](13.9,2.5);
\draw[thick,red] (13.9,2.5)--(15,2.5);
\node at (9.1,2.9){$-i\Omega$};
\node at (11.6,3){$9\Omega$};
\node at (13.4,3){$3i\Omega$};
\node[rotate=90] at (7.2,1.3){$(16,18,20,22)$};
\node at (8,1.0){$-$};
\node at (8,1.5){$+$};
\end{circuitikz}
\caption{A complex RLC circuit with fuzzy current and voltage}\label{circuitpicture}
\end{figure}
Applying Kirchhoff's voltage law in loops $\tilde{I}_{1}, \tilde{I}_{2}, \tilde{I}_{3}, \tilde{I}_{4}$, and $\tilde{I}_{5}$, we get
\begin{eqnarray}
   (7-3i)\tilde{I}_{1}+(-7+3i)\tilde{I}_{2}&=&(9, 11, 13,15)\\
   (-7+3i)\tilde{I}_{1}+(7-3i)\tilde{I}_{2}&=&i(3,5,7,9)\\
   (3+4i)\tilde{I}_{3}+(-3-5i)\tilde{I}_{4}+i\tilde{I}_{5}&=&i(-9,-7,-5,-3)\\
   (-3-5i)\tilde{I}_{3}+(7+6i)\tilde{I}_{4}-4i\tilde{I}_{5}&=&0\\
   i\tilde{I}_{3}-4i\tilde{I}_{4}+(9+5i)\tilde{I}_{5}&=&(16,18,20,22),
\end{eqnarray}
where $\tilde{I}_{j}=\tilde{p}_{j}+i\tilde{q}_{j},~j=1,2,3,4,5.$ This system of equations can be equivalently written as 
\begin{eqnarray*}
&\begin{bmatrix}
      7-3i & -7+3i & 0& 0& 0\\
      -7+3i & 7-3i & 0 & 0& 0\\
      0 & 0 & 3+4i & -3-5i& i \\
      0&0&-3-5i&7+6i&-4i\\
      0&0&i&-4i&9+5i
    \end{bmatrix}
    \begin{bmatrix}
        \tilde{I}_1 \\\tilde{I}_2\\\tilde{I}_3\\\tilde{I}_4\\\tilde{I}_5
    \end{bmatrix}=
    \begin{bmatrix}
     (9,11,13,15)\\ 
     i(3,5,7,9)\\
     i(-9,-7,-5,-3)\\
     0\\
     (16,18,20,22)
    \end{bmatrix}.\\
    &i.e., ~A\tilde{\mathbf{z}}=\tilde{\mathbf{b}}.
\end{eqnarray*}
This $5\times5$ complex FLS is transformed into a $10\times10$ real FLS according to the approach in \cite{Complex}, and the resulting structure is expressed as
\[
\begin{bmatrix}
    7 & -7& 0& 0& 0& 3& -3& 0& 0& 0 \\ -7& 7& 0& 0& 0& -3& 3& 0& 0& 0 \\ 0& 0& 3 &-3 &0 &0 &0 &-4 &5 &-1 \\ 0& 0& -3 &7 &0 &0 &0 &5 &-6 &4 \\ 0& 0& 0& 0& 9& 0& 0& -1& 4& -5 \\ -3 &3 &0 &0 &0 &7 &-7 &0 &0 &0 \\ 3& -3& 0& 0& 0& -7& 7& 0& 0& 0 \\ 0& 0& 4& -5& 1& 0& 0& 3& -3& 0 \\ 0& 0& -5& 6& -4& 0& 0& -3& 7& 0 \\ 0& 0& 1& -4& 5& 0& 0& 0& 0& 9
\end{bmatrix}
\begin{bmatrix}
 \tilde{p}_{1} \\ \tilde{p}_{2} \\ \tilde{p}_{3} \\ \tilde{p}_{4} \\ \tilde{p}_{5} \\ \tilde{q}_{1} \\ \tilde{q}_{2} \\\tilde{q}_{3} \\\tilde{q}_{4} \\\tilde{q}_{5}
\end{bmatrix}=
\begin{bmatrix}
 (9+2\alpha,15-2\alpha)\\  0\\ 0\\ 0 \\ (16+2\alpha,22-2\alpha)\\0\\(3+2\alpha,9-2\alpha) \\(-9+2\alpha,-3-2\alpha) \\0 \\0
\end{bmatrix}, 0 \leq \alpha \leq 1.
\]
Since the obtained system is a real FLS and the associated crisp linear system is inconsistent. By using Algorithm \ref{algo2}, the solution in terms of $\{1,2,3\}$-inverse is obtained.
\iffalse, and the final form is expressed as
\[\begin{bmatrix}
( \frac{1}{10}\alpha+\frac{423}{580},-\frac{1}{10}\alpha+\frac{597}{580})+i( \frac{1}{10}\alpha-\frac{117}{580},-\frac{1}{10}\alpha+\frac{57}{580})\\
(0,0)+i(0,0)\\
(0.4164\alpha-1.0997,-0.4164\alpha+0.1495)+i(1.5836\alpha-3.6319,-1.5836\alpha+1.1189)\\
(0.2629\alpha+0.2945,-0.2629\alpha+1.0832)+i(1.0705\alpha-2.0415,-1.0705\alpha+1.1699)\\
(0.8118\alpha+0.5892,-0.8118\alpha+3.0246)+i(0.5216\alpha-1.4272,-0.5216\alpha+0.1374)
\end{bmatrix}.
\]
\fi

Figure \ref{fig:plot3} illustrates the weak fuzzy solution of the complex-valued system $A\tilde{\mathbf{z}}=\tilde{\mathbf{b}},$ with the real part $\tilde{p}_j=(\underline{p}_j(\alpha),\overline{p}_j(\alpha)),j=1,2,3,4,5,$ displayed in the left panel and the imaginary part $\tilde{q}_j=(\underline{q}_j(\alpha),\overline{q}_j(\alpha)),j=1,2,3,4,5,$ displayed in the
right panel for different values of $\alpha \in [0,1]$. Each curve corresponds to the parametric form of the weak fuzzy solution. The first two current components, $\tilde{I}_1$ and $\tilde{I}_2$, exhibit the crisp intervals independent of $\alpha$, whereas the currents $\tilde{I}_3,\tilde{I}_4$, and $\tilde{I}_5$ represent TrFN. This demonstrates that the proposed method effectively models the propagation of fuzziness and complex behavior in the electrical network.
 \begin{figure}[H]
    \centering
\includegraphics[height=2.5in,width=3.1in]{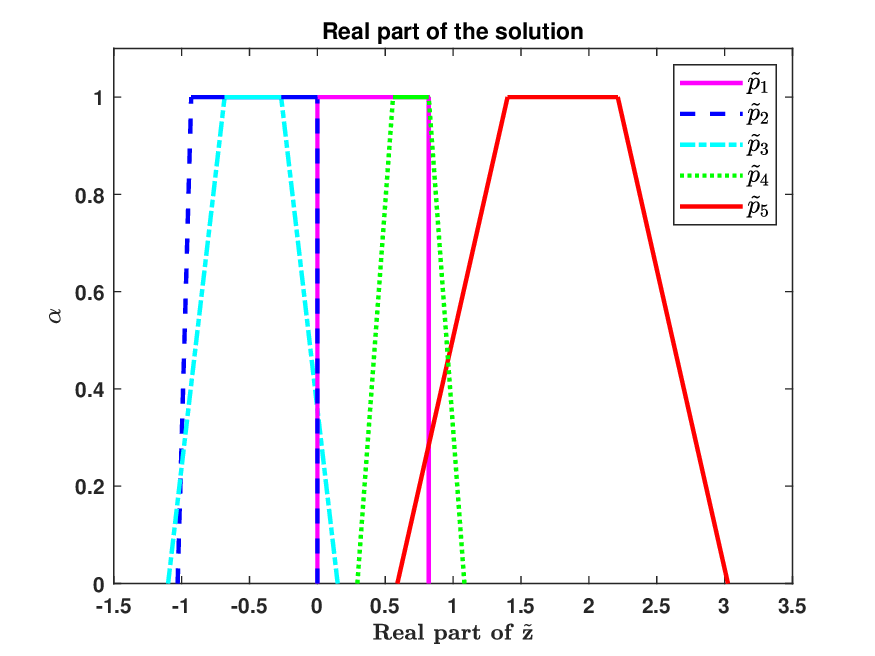}
     \includegraphics[height=2.5in,width=3.1in]{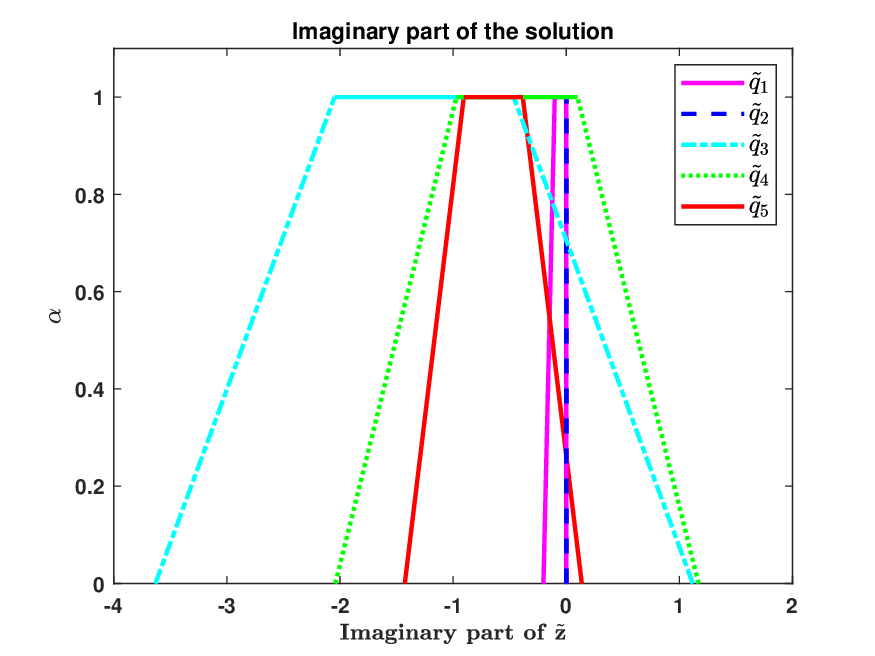}
    \caption{Solution Graph of the Complex FLS}
    \label{fig:plot3}
\end{figure}
\end{example}
Next, we consider a Markov decision process with fuzzy rewards. In practical applications, the reward function associated with a particular state in a Markov Decision Process (MDP) is not always precise due to environmental fluctuations or ambiguity. Addressing these problems provides a framework for analyzing the random processes, such as the movement of gas molecules in a chamber and the navigation of robots along narrow paths. In this example, the rewards are typically represented by using Triangular fuzzy numbers, which effectively models imprecision.
The increasing fuzzy rewards show the goal significance, which helps in resolving the practical issues. 
\begin{example} [\cite{Berman}] \label{example 1} Consider a motion in which a particle is suspended in a gas that moves along a straight path in unit steps. Assuming that a particle moves to the right with a probability p=0.7 and to the left with a probability q=1-0.7=0.3 when it collides. When it attains the extreme position, it reverts one step with probability 1. 
\begin{figure}[H]
    \centering
   \begin{tikzpicture}
   \node at (0,0) (S1) {$S_1$};
    \node at (2,0) (S2) {$S_2$};
     \node at (4,0) (S3) {$S_3$};
      \node at (5,0) (dots) {$\cdots$};
      \node at (6,0) (Sn-1) {$S_{n-1}$};
      \node at (8,0) (Sn) {$S_n$};
      
     \draw[->,blue] (S1) to[bend left=40] node[above] {$1$} (S2);
      \draw[<-,red] (S1) to[bend right=40] node[below] {$q$} (S2);
      \draw[->,blue] (S2) to[bend left=40] node[above] {$p$} (S3);
      \draw[<-,red] (S2) to[bend right=40] node[below] {$q$} (S3);

      \draw[->,blue] (Sn-1) to[bend left=40] node[above] {$p$} (Sn);
      \draw[<-,red] (Sn-1) to[bend right=40] node[below] {$1$} (Sn);
   \end{tikzpicture}
    \caption{Markov Chain with Transition Probabilities}
\end{figure}
The corresponding transition matrix $\mathcal{T}$ of order $n=4$ is given by
$\mathcal{T}=\begin{bmatrix}
    0 & 1 & 0 & 0\\q & 0 & p & 0 \\ 0 & q & 0 & p \\ 0 & 0 & 1 & 0   
   \end{bmatrix}.$
   Consider Bellman's matrix equation 
   \begin{eqnarray} \label{Bellman_equation}
   (I-\gamma \mathcal{T})\tilde{\mathbf{v}}=\tilde{\mathbf{r}},      
   \end{eqnarray} where $\mathcal{T}$ is a transition matrix, $\tilde{\mathbf{r}}$ is a fuzzy reward vector, $\tilde{\mathbf{v}}$ is a value function, and $\gamma \in [0,1]$ is a discount factor. Let the reward assigned be $\tilde{\mathbf{r}}=\begin{bmatrix}
          (2,1,1)\\
          (3,1,1) \\
          (4,1,1) \\
        (5,1,1) \end{bmatrix}.$
        Here, the fuzzy rewards are assigned based on the position of a particle in a particular state $S_{i},i=1,2,3,4$. Low (high) fuzzy rewards are assigned in the starting (final) state, while the intermediate state receives increasing fuzzy rewards, with the aim of reflecting an increasing significance of reaching a goal. When $\gamma <1$, the influence of future reward increases gradually with an increasing value of $\gamma$, and the value function $\tilde{\mathbf{v}}$ remains finite. At $\gamma=1$, all future rewards are considered equal, which results in the matrix $(I-\gamma \mathcal{T})$ to be singular. By substituting $\gamma$ as 1 in the FLS (\ref{Bellman_equation}), we obtain 
   \begin{eqnarray*}
   \begin{bmatrix}
   1&-1&0&0\\
   -0.3&1&-0.7&0\\
   0&-0.3&1&-0.7\\
   0&0&-1&1 
\end{bmatrix}\begin{bmatrix}
       \tilde{v_1} \\ \tilde{v_2} \\ \tilde{v_3} \\\tilde{v_4}
   \end{bmatrix}&=&\begin{bmatrix}
          (2,1,1)\\
          (3,1,1) \\
          (4,1,1) \\
        (5,1,1) \end{bmatrix}.\label{markov}
   \end{eqnarray*}
   The extended $8 \times 8$ system $SV=R$ is expressed as
   \begin{eqnarray} \label{8by8}
\begin{bmatrix}
1&0&0&0&0&1&0&0\\0&1&0&0&0.3&0&0.7&0\\0&0&1&0&0&0.3&0&0.7\\0&0&0&1&0&0&1&0\\0&1&0&0&1&0&0&0\\0.3&0&0.7&0&0&1&0&0\\0&0.3&0&0.7&0&0&1&0\\0&0&1&0&0&0&0&1
\end{bmatrix}
\begin{bmatrix}
    \underline{v}_{1}(\alpha) \\ \underline{v}_{2}(\alpha) \\
    \underline{v}_{3}(\alpha) \\
    \underline{v}_{4}(\alpha) \\
    -\overline{v}_{1}(\alpha)\\
    -\overline{v}_{2}(\alpha)\\
    -\overline{v}_{3}(\alpha)\\
    -\overline{v}_{4}(\alpha)\\
\end{bmatrix}&=&
\begin{bmatrix}
1+\alpha\\2+\alpha\\3+\alpha\\4+\alpha\\-3+\alpha\\-4+\alpha\\-5+\alpha\\-6+\alpha
\end{bmatrix}, 0 \leq \alpha \leq 1.
\end{eqnarray}
By applying Algorithm \ref{algo2}, the explicit form of the solution takes the form 
\[
\begin{bmatrix}
    \underline{v}_{1}(\alpha) \\ \underline{v}_{2}(\alpha) \\
    \underline{v}_{3}(\alpha) \\
    \underline{v}_{4}(\alpha) \\
    -\overline{v}_{1}(\alpha)\\
    -\overline{v}_{2}(\alpha)\\
    -\overline{v}_{3}(\alpha)\\
    -\overline{v}_{4}(\alpha)\\
\end{bmatrix}=\begin{bmatrix}
  0.9566+0.5000\alpha\\
  -0.3555+0.5000\alpha\\
  -1.9280+0.5000\alpha\\
  -0.6731+0.5000\alpha\\
  -1.9566+0.5000\alpha\\
  -0.6445+0.5000\alpha\\
   0.9280+0.5000\alpha\\
  -0.3569+0.5000\alpha
\end{bmatrix}.
\]
By Theorem \ref{Theorem strong 1}, the obtained solution is a strong fuzzy solution. Figure \ref{fig:plot1} illustrates the strong fuzzy solution $\tilde{\mathbf{v}}$ obtained using $\mathcal{L}U$ decomposition of $S$. Clearly, each $\tilde{v}_i,~i=1,2,3,4,$ corresponds to an Triangular fuzzy number in a parametric form with respect to $\alpha$.
\begin{figure}[H]
    \centering    
    \includegraphics[height=3.1in,width=5in]{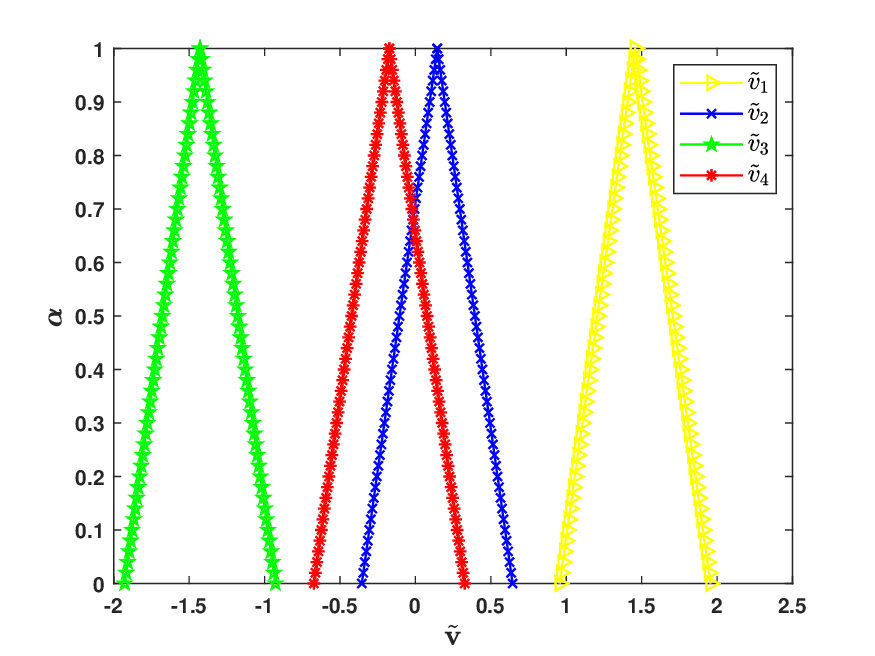}
    \caption{Solution of a singular FLS}
    \label{fig:plot1}
\end{figure}
Thus, the final form of the solution using $\mathcal{L}U$ decomposition is given by
\[
\begin{bmatrix}
       \tilde{v_1} \\ \tilde{v_2} \\ \tilde{v_3} \\\tilde{v_4}
   \end{bmatrix}=\begin{bmatrix}
(0.9566+0.5000\alpha,1.9566-0.5000\alpha)\\
(-0.3555+0.5000\alpha,0.6445-0.5000\alpha)\\
(-1.9280+0.5000\alpha,-0.9280-0.5000\alpha)\\
(-0.6731+0.5000\alpha,0.3269-0.5000\alpha)
\end{bmatrix}.
\]
Similarly, by applying Algorithm \ref{algo3} and \ref{algo4}, $QR$ and $SVD$ yield the same solution.
In Table \ref{tab:placeholder} we present the mean computational time required to find the minimum norm least squares solution of the proposed direct methods, including $\mathcal{L}U, QR,$ and $SVD$, Hosseinpour \cite{svd}, and MATLAB, for different values of $\alpha$ within the interval $[0,1]$. Moreover, the least squares solution of these direct methods is obtained via Algorithms \ref{algo2}, \ref{algo3}, and \ref{algo4}. In this example, the coefficient matrix is considered as an extension of the above transition matrix $\mathcal{T}$ and the reward $\tilde{\mathbf{r}}=(r_i)$, is defined as $r_i=(-1+\alpha,1-\alpha), \alpha \in [0,1]$ and $i=1,2,\cdots,n$. In this problem, all computations are carried out with fixed $\gamma$ value of 1.
\begin{table}[H]
    \centering
    \caption{Comparative Analysis of Mean Computational Time of Direct Methods.}
    \label{tab:placeholder}
    \begin{tabular}{c c c c c c c}
    \hline
   \multirow{2}{*}{\shortstack{Size of a\\ Matrix}} & \multirow{2}{*}{Methods} & \multicolumn{5}{c}{Mean Time (sec)} \\ 
\cline{3-7}
 & & $\alpha=0$ & $\alpha=0.25$ & $\alpha=0.5$ & $\alpha=0.75$ & $\alpha=1$ \\
\hline
    $100 \times 100 $ & \makecell{$\mathcal{L}U$ (proposed) \\ $QR$ (proposed) \\ $SVD$ (proposed)\\Hosseinpour \cite{svd}\\ MATLAB} & \makecell{0.3673\\0.3627\\0.5139\\0.3681\\0.3840} & \makecell{0.3504\\0.3466\\0.4837\\0.3811\\0.3762} & \makecell{0.3523\\0.3466\\0.4813\\0.3778\\0.3762} & \makecell{0.3508\\0.3451\\0.4826\\0.3883\\0.3723} & \makecell{0.2882\\0.2670\\0.3684\\0.2896\\0.2980}\\
    \hline
    $200 \times 200 $ & \makecell{$\mathcal{L}U$ (proposed) \\ $QR$ (proposed) \\ $SVD$ (proposed)\\Hosseinpour \cite{svd}\\ MATLAB} & \makecell{1.3644\\0.8132\\1.2930\\1.4016\\1.3723} & \makecell{1.3354\\0.8158\\1.2690\\1.3744\\1.3575} & \makecell{1.3420\\0.8036\\1.2741\\1.3945\\1.3567} & \makecell{1.3301\\0.8035\\1.2816\\1.3952\\1.3534} & \makecell{1.0237\\0.6408\\0.9798\\1.1072\\1.0810}\\
    \hline
    $500 \times 500 $ & \makecell{$\mathcal{L}U$ (proposed) \\ $QR$ (proposed) \\ $SVD$ (proposed)\\Hosseinpour \cite{svd}\\ MATLAB} & \makecell{8.5879\\7.0097\\8.2631\\8.8631\\9.2021} & \makecell{8.4016\\7.0901\\8.2893\\8.9579\\8.7559} & \makecell{8.5386\\7.1484\\8.3842\\9.0236\\8.8751} & \makecell{8.5386\\7.2374\\8.3885\\9.0399\\9.2001} & \makecell{6.4856\\5.6882\\6.5538\\6.7821\\6.6794}\\
    \hline
    $1000 \times 1000 $ & \makecell{$\mathcal{L}U$ (proposed) \\ $QR$ (proposed) \\ $SVD$ (proposed)\\Hosseinpour \cite{svd}\\ MATLAB} & \makecell{50.5868\\39.8236\\37.7119\\63.7851\\64.4268} & \makecell{50.6748\\39.9505\\37.8283\\60.3205\\63.4311} & \makecell{51.5023\\40.2237\\37.3139\\64.7233\\63.9479} & \makecell{55.6680\\40.1737\\37.3052\\60.5895\\85.1639} & \makecell{40.7696\\32.6497\\30.7157\\47.5036\\75.6219}\\
    \hline
    $2000 \times 2000 $ & \makecell{$\mathcal{L}U$ (proposed) \\ $QR$ (proposed) \\ $SVD$ (proposed)\\Hosseinpour \cite{svd}\\ MATLAB} & \makecell{271.3835\\141.7869\\167.2870\\451.3327\\367.3540} & \makecell{304.0993\\156.7859\\167.8046\\571.2192\\398.1755} & \makecell{309.2561\\179.7081\\192.5695\\699.9123\\451.4055} & \makecell{316.8527\\179.9787\\197.3244\\312.9890\\480.6870} & \makecell{241.9585\\138.8589\\151.1812\\247.2937\\375.7968}\\
    \hline
    \end{tabular}
\end{table}

Table \ref{tab:placeholder} reports the mean computational cost for different sizes of matrices under varying values of $\alpha$. In all the methods, the minimum computational time occurred when $\alpha=1$, while slightly higher values occur at $\alpha=0.25$ and $0.5$. The $QR$ decomposition consistently exhibits the lowest mean computational time; {\it i.e.,} a $100 \times 100$ matrix requires 0.3627 seconds at $\alpha=0$ and 0.2670 seconds at $\alpha=1$, and a $2000 \times 2000$ matrix requires 138.8589 seconds at $\alpha=1$ and 141.7869 seconds at $\alpha=0$. Following $QR$, $SVD$ shows a lower mean computational time. For instance, in $1000 \times 1000$ matrix, $SVD$ takes approximately 32.6497 seconds at $\alpha=1$ and 39.8236 seconds at $\alpha=0$, whereas all other approaches require approximately between 40.7696 and 75.6219 seconds across different values of $\alpha$. Although $\mathcal{L}U$ decomposition is computationally slow, it still produces the reduced cost of time when compared to Hosseinpour \cite{svd} and MATLAB. Overall, $QR$ demonstrates the most efficient performance, producing the lowest mean computational time for all $\alpha.$ This analysis, indicates that the proposed methods highlight the effectiveness in handling large-scale systems.\par 
To further validate the proposed techniques for large matrices, we examine the mean relative error ($\mathcal{R}_{ME}$) and mean relative residual ($\mathcal{R}_{MR}$) in determining the minimum norm least squares solution. The relative error signifies the deviation of the obtained solution from the exact solution, while the residual measures the consistency of the computed solution with the given system. The results for various sizes of matrices are summarized in Table \ref{table1}. The reward $\tilde{\mathbf{r}}=(r_i)$, in this computation is considered as $r_i=(-1+\alpha,1-\alpha)$, where $\alpha \in [0,1)$ and $i=1,2,\cdots,n$.
\begin{table}[H]
\centering
\caption{Analysis of $\mathcal{R}_{ME}$ and $\mathcal{R}_{MR}$ while Computing the Minimum Norm Least Squares Solution.}\label{table1}
\begin{tabular}{c c c c} 
\hline
   Size of a Matrix & Proposed Methods  & $\mathcal{R}_{ME}$ & $\mathcal{R}_{MR}$\\
   \hline
   $100 \times 100$ & \makecell{$\mathcal{L}U$ \\ $QR$ \\ $SVD$ } & \makecell{$0.2164 e^{-12}$ \\ $0.5815 e^{-13}$ \\$0.5581e^{-13}$} & \makecell{$0.9361 e^{-14}$ \\ $0.3526 e^{-14}$ \\$0.2307 e^{-14}$} \\ 
   \hline
   $200 \times 200$ &  \makecell{$\mathcal{L}U$ \\ $QR$ \\ $SVD$ } & \makecell{$0.5418 e^{-12}$ \\ $0.9170 e^{-13}$ \\$0.9233 e^{-13}$} & \makecell{$0.1703 e^{-13}$ \\ $0.4230 e^{-14}$ \\$0.3203 e^{-14}$} \\ 
   \hline
   $500 \times 500$ &  \makecell{$\mathcal{L}U$ \\ $QR$ \\ $SVD$ }& \makecell{$0.9567 e^{-12}$ \\ $0.1581 e^{-12}$ \\$0.1556 e^{-12}$} & \makecell{$0.5997 e^{-13}$ \\ $0.6669 e^{-14}$ \\$0.4498 e^{-14}$} \\ 
   \hline
   $1000 \times 1000$ & \makecell{$\mathcal{L}U$ \\ $QR$ \\ $SVD$ } & \makecell{$0.1693 e^{-11}$ \\ $ 0.2069 e^{-12}$ \\$ 0.2042e^{-12}$} & \makecell{$ 0.1617e^{-12}$ \\ $ 0.7751e^{-14}$ \\$ 0.6184e^{-14}$} \\ 
   \hline
   $2000 \times 2000$ & \makecell{$\mathcal{L}U$ \\ $QR$ \\ $SVD$ } & \makecell{$0.5191 e^{-11}$ \\ $ 0.2846 e^{-12}$ \\$ 0.3245e^{-12}$} & \makecell{$ 0.4646e^{-12}$ \\ $ 0.1136e^{-13}$ \\$ 0.8447e^{-14}$} \\ 
   \hline
\end{tabular}
\end{table}

In Table \ref{table1}, we analyze $\mathcal{R}_{ME}$ and $\mathcal{R}_{MR}$ for the proposed direct methods across systems of varying sizes, ranging from $100 \times 100$ to $2000 \times 2000$. For smaller $100 \times 100$ and $200 \times 200$ systems, $QR$ and $SVD$ outperforms $LU$, with $SVD$ yielding the lowest $\mathcal{R}_{MR}$. For instance, in $100 \times 100$ matrix, $SVD$ yields $\mathcal{R}_{ME}=0.5581e^{-13}$ and $\mathcal{R}_{MR}=0.2307e^{-14}$, while $QR$ yields $\mathcal{R}_{ME}=0.5815e^{-13}$ and $\mathcal{R}_{MR}=0.3526e^{-14}$. In contrast, $\mathcal{L}U$ achieves larger values with $\mathcal{R}_{ME}=0.2164e^{-12}$ and $\mathcal{R}_{MR}=0.9361e^{-14}$. As the size increases to $1000 \times 1000$ and $2000 \times 2000$, the superiority of $QR$ and $SVD$ even more evident, with $SVD$ showing the lowest $\mathcal{R}_{MR}$ value. Among all the methods, $SVD$ consistently demonstrates superior performance by obtaining the lowest $\mathcal{R}_{MR}$ and smaller $\mathcal{R}_{ME}$ across all matrix sizes. Overall, the proposed methods exhibit high efficiency, as they provide very low values for both $\mathcal{R}_{ME}$ and $\mathcal{R}_{MR}$ for computing the $MP$-inverse.
\end{example}

\section{Conclusion}\label{conc1.1}
This study introduced an effective strategy for solving an FLS by using $\mathcal{L}U$ decomposition, $QR$ decomposition, and $SVD$ for the block-structure matrix as a structural backbone. Using this framework, we developed a direct technique to compute the $\{1,2\}$-inverse, $\{1,2,3\}$-inverse and $MP$-inverse for obtaining the least squares solutions. Further, the criteria for necessary and sufficient conditions for consistency and the existence of a strong solution are discussed. The practicability of the approach was demonstrated through a complex analog circuit, and the Markov decision process involving fuzzy rewards demonstrates its ability to handle problems under uncertainty. While the $\{1, 2\}$-inverse, $\{1,2,3\}$-inverse and $MP$-inverse serve as a primary tool in this work, other generalized inverses, including group and Drazin inverses \cite{Ben} can further extend the scope of solving these singular systems. The following problems remain open for future work:
\begin{itemize}
    \item Exploring the proposed algorithm in fuzzy differential equations (FDE) of the form described in \cite{gap-1}, which frequently arise in applications such as electrical circuits and population dynamics.
    \item Investigating these direct techniques for FFLS which helps to capture and handle the broader uncertainty present in real-world applications.  
    \item Extending this framework to handle fuzzy linear programming problem as described in \cite{gap-2}, would be an interesting direction for providing effective optimization solutions under uncertainty.
\end{itemize}
\section{Declaration of competing interest} 
The authors declare that they do not have any kind of conflicting financial interest or personal relationship that might affect the results in this paper. 
\section{Data availability} This research paper does not involve the use of any data.

\end{document}